\def\resp{{\sfcode`\.1000 resp.}}
\def\ie{{\sfcode`\.1000 i.e.}}
\def\eg{{\sfcode`\.1000 e.g.}}
\def\cf{{\sfcode`\.1000 cf.}}
\newcommand{\mathletter}[1]{\mathbf{#1}}
\newcommand{\N}{\mathletter{N}}				
\newcommand{\Z}{\mathletter{Z}}				
\newcommand{\Pe}{\mathletter{P}}
\renewcommand{\P}{\Pe}
\newcommand{\A}{\mathletter{A}}
\renewcommand{\to}[1][]{\overset{#1}{\rightarrow}}		
\newcommand{\To}[1][]{\overset{#1}{\longrightarrow}}	
\newcommand{\inj}[1][]{\overset{#1}{\hookrightarrow}}		
\newcommand{\surj}[1][]{\overset{#1}{\twoheadrightarrow}}		
\newcommand{\infl}[1][]{\overset{#1}{\rightarrowtail}}		
\newcommand{\ot}[1][]{\overset{#1}{\leftarrow}}	 			
\newcommand{\oT}[1][]{\overset{#1}{\longleftarrow}} 			
\newcommand{\inv}{^{-1}}								
\newcommand{\Colon}{\,\colon\,}
\newcommand{\skp}[1]{\langle #1\rangle}				
\DeclareMathOperator{\Hom}{Hom}				
\DeclareMathOperator{\im}{im}				
\DeclareMathOperator{\coker}{coker}			
\DeclareMathOperator{\Ho}{H}					
\DeclareMathOperator{\Bl}{Bl}				
\DeclareMathOperator{\K}{K}						
\DeclareMathOperator{\Ge}{G}						
\newcommand{\on}{\,\,\mathrm{on}\,\,}			
\newcommand{\OXU}{\O_{\skp{X}_U}} 				
\DeclareMathOperator{\NK}{NK}
\newcommand{\df}[1]{\textbf{#1}}					
\newcommand{\infcats}{$\infty$"~categories}
\renewcommand{\phi}{\varphi}
\renewcommand{\epsilon}{\varepsilon}
\renewcommand{\rho}{\varrho}
\theoremstyle{definition}				
	\newtheorem{defin}{Definition}[section]
	\newtheorem{example}[defin]{Example}
	\newtheorem{remark}[defin]{Remark}
	\newtheorem*{remark*}{Remark}
	\newtheorem{reminder}[defin]{Reminder}
	\newtheorem*{notation*}{Notation}
\theoremstyle{plain} 					
	\newtheorem{thm}[defin]{Theorem}
	\newtheorem*{thm*}{Theorem}
	\newtheorem{prop}[defin]{Proposition}
	\newtheorem{lemma}[defin]{Lemma}
	\newtheorem{cor}[defin]{Corollary}
	\newtheorem*{cor*}{Corollary}
		\newcounter{zaehler}
\theoremstyle{remark} 					
\newcounter{formulanumber}[defin]
\newcommand{\vquad}{\vspace{6pt}}
\DeclareMathOperator*{\colim}{colim}
\DeclareMathOperator{\fib}{fib}
\newcommand{\mymathcal}[1]{\mathcal{#1}}
\renewcommand{\O}{\mathcal{O}}		
\newcommand{\Acal}{\mathcal{A}}
\newcommand{\Bcal}{\mathcal{B}}
\newcommand{\Ecal}{\mathcal{E}}
\newcommand{\Ical}{\mathcal{I}}
\newcommand{\Mcal}{\mymathcal{M}}
\DeclareMathOperator{\RZ}{RZ}
\newcommand{\fp}{^\mathrm{fp}}
\newcommand{\mathcat}[1]{\mathrm{#1}}
\newcommand{\Ring}{\mathcat{Ring}}
\newcommand{\Top}{\mathcat{Top}}
\newcommand{\Vect}{\mathcat{Vec}}
\newcommand{\Coh}{\mathcat{Coh}}
\newcommand{\Mod}{\mathcat{Mod}}
\newcommand{\Modfp}{\Mod^\mathrm{fp}}
\newcommand{\Modfpleq}{\Mod^\mathrm{fp,\leq 1}}
\newcommand{\Mdf}{\mathcat{Mdf}}
\newcommand{\Ch}{\mathcat{Ch}}
\newcommand{\Perff}{\mathcat{Perf}}
\newcommand{\Nil}{\mathcat{Nil}}
\newcommand{\Nilu}{\underline{\Nil}}
\DeclareMathOperator{\Spec}{Spec}		
\newcommand{\carre}[4]{\[\begin{xy}\xymatrix{#1\ar[r]\ar[d]&#2\ar[d]\\#3\ar[r]&#4}\end{xy}\]}
\newcommand{\carrelift}[4]{\[\begin{xy}\xymatrix{#1\ar[r]\ar[d]&#2\ar[d]\\#3\ar[r]\ar@{.>}[ur]&#4}\end{xy}\]}
\newcommand{\lift}[3]{\[\begin{xy}\xymatrix{&#1\ar[d]\\#2\ar[r]\ar@{.>}[ur]&#3}\end{xy}\]}
\newcommand{\liftmap}[6]{\[\begin{xy}\xymatrix{&#1\ar[d]^{#2}\\#3\ar@{.>}[ur]^{#4}\ar[r]_{#5}&#6}\end{xy}\]}
\newcommand{\stacks}[1]{\cite[\href{http://stacks.math.columbia.edu/tag/#1}{Tag #1}]{stacks-project}}
\title{K-Theory of Admissible Zariski-Riemann Spaces}
\author{Christian Dahlhausen}
\address{Mathematisches Institut, Universität Heidelberg, Im Neuenheimer Feld 205, 69120 Heidelberg, Germany}
\email{cdahlhausen@mathi.uni-heidelberg.de}
\urladdr{cdahlhausen.eu}
\thanks{This work was supported by the Swiss National Science Foundation (grant number 184613). The author is supported by Deutsche Forschungsgemeinschaft (DFG) through the Collaborative Research Centre TRR 326 ``Geometry and Arithmetic of Uniformized Structures'' (project number 444845124).}
\begin{document}
\begin{abstract}
We study relative algebraic K-theory of admissible Zariski-Riemann spaces and prove that it is equivalent to relative G-theory and satisfies homotopy invariance. Moreover, we provide an example of a non-noetherian abelian category whose negative K-theory vanishes.\\
\noindent \textsc{Keywords.} K-Theory, Zariski-Riemann spaces.\\
\noindent \textsc{Mathematical Subject Classification 2010.} 19E08, 19D35.
\end{abstract}
\maketitle
\setcounter{tocdepth}{1}
\tableofcontents

\section{Introduction}

Under the assumption of resolution of singularities, one can obtain for a non-regular scheme $X$ a regular scheme $X'$ which admits a proper, birational morphism $X' \to X$. For many purposes $X'$ behaves similarly to $X$. 
Unfortunately, resolution of singularities is not available at the moment in positive characteristic. 
From the perspective of K-theory, a good workaround for this inconvenience is to work with a Zariski-Riemann type space which is not a scheme anymore, but behaves almost as good as a regular model does. For instance, for a regular noetherian scheme $X$ one has equivalences $\K(\Vect(X))\simeq\K(\Coh(X))$ and $\K(X)\simeq\K(X\times\A^1)$ for Quillen's K-theory. For the \emph{Zariski-Riemann type space} $\skp{X}$ defined to be the limit of all schemes which are projective and birational over a scheme $X$ within the category of locally ringed spaces, Kerz-Strunk-Tamme \cite{kst} established that $\K(\Vect(\skp{X})) \simeq \K(\Coh(\skp{X}))$ and $\K(\Vect(\skp{X})) \simeq \K(\Vect(\skp{X}\times\A^1))$. The purpose of this note is to prove the analogous statement for \emph{admissible Zariski-Riemann spaces} $\skp{X}_U$ which are limits of projective morphisms to a scheme $X$ which are isomorphisms over an open subscheme $U$ (Definition~\ref{U-admissible_ZR-space--defin}). As one only modifies something outside $U$, one has to pass to the relative K-theory. The main results are the following ones, see Theorem~\ref{ZR_K-theory_support_g-theory--thm} and Theorem~\ref{K-ZR-support-homotopy-invariance--thm} as well as their corollaries.

\begin{thm*}
Let $X$ be a reduced, divisorial, and noetherian scheme and let $U$ be a dense open subset of $X$. Denote by $\tilde{Z}$ the complement of $U$ in $\skp{X}_U$. Then 
\begin{enumerate}
\item $\K\bigl(\skp{X}_U \on  \tilde{Z}\bigr) \simeq \K\bigl(\Coh_{\tilde{Z}}(\skp{X}_U)\bigr) \simeq \Ge\bigl(\skp{X}_U \on  \tilde{Z}\bigr)$ and

\item $\K\bigl(\skp{X}_U \on  \tilde{Z}\bigr) \simeq \K\bigl( \skp{X}_U\times\A^1 \on \tilde{Z}\times\A^1\bigr)$.
\end{enumerate} 
 Moreover, if $U$ is regular, then 
\begin{enumerate} \setcounter{enumi}{2}
\item $\K(\skp{X}_U) \simeq \Ge(\skp{X}_U)$ and
\item $\K(\skp{X}_U) \simeq \K(\skp{X}_U\times\A^1)$.
\end{enumerate}
where $\Ge(\skp{X}_U) := \K(\Modfp(\skp{X}_U))$ (Definition~\ref{G-theory-ZR--def}).
\end{thm*}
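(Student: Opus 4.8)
The plan is to prove (1) along the lines of \cite{kst}, relativized to the fixed open $U$, and to deduce (2), (3), (4) from (1) by localization and classical homotopy invariance. For the continuity reduction, write $\skp{X}_U\simeq\lim_{X'}X'$ in locally ringed spaces, the limit ranging over the cofiltered category $\mathscr M$ of reduced $U$-admissible projective modifications $X'\to X$, and put $Z':=X'\setminus U$, a closed subscheme of $X'$. Nonconnective K-theory commutes with this limit (Thomason-Trobaugh for each $X'$, and \cite{kst} for $\K(\Vect(\skp{X}_U))\simeq\colim_{X'}\K(\Vect(X'))$); since $U$ is constant along $\mathscr M$, taking fibres of $\K(\Vect(-))\to\K(\Vect(U))$ gives $\K(\skp{X}_U\on\tilde Z)\simeq\colim_{X'}\K(X'\on Z')$. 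Likewise $\O_{\tilde Z}=\colim_{X'}\O_{Z'}$, together with continuity of G-theory for this system, identifies $\K(\Coh(\tilde Z))$ with $\colim_{X'}\K(\Coh(Z'))$. Thus (1) reduces to showing that the natural comparison maps
\[
\K(X'\on Z')\;\simeq\;\K(\Perf_{Z'}(X'))\;\longrightarrow\;\K(\Coh_{Z'}(X'))\;\xrightarrow{\ \sim\ }\;\K(\Coh(Z'))
\]
(Thomason-Trobaugh; the inclusion of perfect into coherent complexes; dévissage) become an equivalence after $\colim_{X'}$.

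For the comparison itself, one has for each $X'$ a map of localization sequences with upper row $\K(X'\on Z')\to\K(\Vect(X'))\to\K(\Vect(U))$ (Thomason-Trobaugh) and lower row $\Ge(Z')\to\Ge(X')\to\Ge(U)$ (localization and dévissage for noetherian G-theory), the verticals being the canonical maps $\K(\Vect(-))\to\Ge(-)$. By the usual $3\times 3$ diagram in spectra, $\cofib(\K(X'\on Z')\to\Ge(Z'))$ is the fibre of $\cofib(\K(\Vect(X'))\to\Ge(X'))\to\cofib(\K(\Vect(U))\to\Ge(U))$; hence, after $\colim_{X'}$, assertion (1) is equivalent to the statement that
\[
\cofib\bigl(\K(\Vect(\skp{X}_U))\to\Ge(\skp{X}_U)\bigr)\;\longrightarrow\;\cofib\bigl(\K(\Vect(U))\to\Ge(U)\bigr)
\]
is an equivalence, where $\Ge(\skp{X}_U)=\colim_{X'}\Ge(X')$. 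This is precisely the relative-over-$U$ form of the identification $\K(\Vect(\skp{X}))\simeq\K(\Coh(\skp{X}))$ of \cite{kst}: the modifications in $\mathscr M$ leave $U$ untouched and blow everything over $X\setminus U$ up, so along $\tilde Z$ the space $\skp{X}_U$ has the same local (valuative) structure as the Zariski-Riemann space of \cite{kst}, and the pro-cdh descent and resolution-theorem inputs there — the latter passing through finitely presented modules of projective dimension at most one, $\Modfpleq$ — apply to the pro-system of blow-up squares supported over the $Z'$. Carrying $U$ along as an unchanged open subscheme in that argument proves (1).

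For (2), note that $\skp{X}_U\times\A^k$ is again an admissible Zariski-Riemann space, namely $\skp{X\times\A^k}_{U\times\A^k}$, with complement of $U\times\A^k$ equal to $\tilde Z\times\A^k$; applying (1) on both sides and using homotopy invariance of G-theory ($\Ge(Z')\simeq\Ge(Z'\times\A^k)$ classically, hence $\K(\Coh(\tilde Z))\simeq\K(\Coh(\tilde Z\times\A^k))$ after $\colim_{X'}$) yields $\K(\skp{X}_U\on\tilde Z)\simeq\K(\skp{X}_U\times\A^k\on\tilde Z\times\A^k)$. For (3), assume $U$ regular; in the map of localization sequences with upper row $\K(\skp{X}_U\on\tilde Z)\to\K(\Vect(\skp{X}_U))\to\K(\Vect(U))$ and lower row $\Ge(\tilde Z)\to\Ge(\skp{X}_U)\to\Ge(U)$, the left vertical is an equivalence by (1) and the right vertical is one because $U$ is regular noetherian, so the middle vertical $\K(\Vect(\skp{X}_U))\to\Ge(\skp{X}_U)$ is an equivalence. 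For (4), compare the localization sequences of $\skp{X}_U$ and of $\skp{X}_U\times\A^k$: the support terms agree by (2), the terms over $U$ resp.\ $U\times\A^k$ agree by homotopy invariance of K-theory for the regular scheme $U$, hence the middle terms agree.

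The heart of the argument — and the main obstacle — is the comparison step: upgrading \cite{kst}'s absolute identification $\K(\Vect(\skp{X}))\simeq\K(\Coh(\skp{X}))$ to the relative, support-theoretic statement over a fixed, possibly singular open $U$, i.e.\ verifying that the pro-cdh descent and resolution-property machinery of \cite{kst} is compatible with holding $U$ fixed, and that the localization, dévissage and continuity statements used are sufficiently functorial for the non-flat transition maps $X''\to X'$ of the blow-up tower that the colimits above are legitimate. Everything else is formal manipulation of fibre sequences.
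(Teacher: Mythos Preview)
Your deductions of (3) and (4) from (1) and (2) via comparison of localisation sequences match the paper exactly. The issue is upstream, in (1) and (2).

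\textbf{Part (1).} Your outline is right in shape --- pass to the colimit over $U$-modifications, compare $\K(X'\on Z')$ with a G-theoretic term, and use d\'evissage --- but the step you label ``continuity of G-theory for this system'' is precisely where the real work lies, and it cannot be invoked as a known fact. The transition maps $X''\to X'$ are blow-ups, hence not flat, so pullback is \emph{not} exact on $\Coh_{Z'}(X')$ and there is no a~priori functor $\Ge(Z')\to\Ge(Z'')$ making $\colim_{X'}\Ge(Z')$ meaningful. The paper supplies exactly this missing ingredient: by Raynaud--Gruson's \emph{platification par \'eclatement} every $F\in\Modfp_{Z'}(X')$ acquires Tor-dimension $\leq 1$ after a further $U$-admissible blow-up, and on $\Modfpleq$ pullback along $U$-modifications \emph{is} exact (because over the dense open $U$ the relevant maps of vector bundles stay injective). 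This gives $\Coh_{\tilde Z}(\skp{X}_U)=\Coh_{\tilde Z}^{\leq 1}(\skp{X}_U)=\colim_{X'}\Coh_{Z'}^{\leq 1}(X')$ as exact categories, after which Thomason--Trobaugh's identification $\K(X'\on Z')\simeq\K(\Coh_{Z'}^{\leq 1}(X'))$ and d\'evissage finish the job. Your gesture towards ``pro-cdh descent'' from \cite{kst} is misplaced here; that machinery is not used for this statement, and the passing mention of $\Modfpleq$ in your sketch is the right thread to pull but you have not pulled it.

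\textbf{Part (2).} Your route --- apply (1) to both $\skp{X}_U$ and $\skp{X}_U\times\A^k$ and reduce to $\K(\Coh(\tilde Z))\simeq\K(\Coh(\tilde Z\times\A^k))$ via classical homotopy invariance of $\Ge(Z')$ at each finite level --- is genuinely different from the paper, which instead runs a Nil-category argument: it identifies $\NK_{n+1}(\skp{X}_U\on\tilde Z)$ with $\Nil_n(\skp{X}_U\on\tilde Z)$ and kills the latter by d\'evissage on $\Nilu(\Coh(\tilde Z))$. Your approach is conceptually cleaner but, as written, needs the $Z'$ to be noetherian for Quillen's $\A^1$-invariance of G-theory, and the paper does not assume $X$ (hence $Z'$) noetherian --- only $U$ is. Even granting noetherianness, you again need the problematic identification $\K(\Coh(\tilde Z))\simeq\colim_{X'}\Ge(Z')$, which rests on the same exactness-of-pullback issue as in (1). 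The paper's Nil/d\'evissage argument sidesteps both problems by working directly with the abelian category $\Coh(\tilde Z)$.
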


The notion of Zariski-Riemann spaces goes back to Zariski \cite{zariski} who called them ``Riemann manifolds'' and was further studied by Temkin \cite{temkin-relative-zr}. Recently, Kerz-Strunk-Tamme \cite{kst} used them to prove that homotopy algebraic K-theory \cite{weibel-htpy} is the cdh-sheafification of algebraic K-theory, and Elmanto-Hoyois-Iwasa-Kelly applied them to prove a bound on the cdh-cohomological dimension \cite{ehik1}.

\vquad Combining part (i) of the theorem with a result of Kerz about the vanishing of negative relative K-theory, we provide an example of a non-noetherian abelian category whose negative K-theory vanishes (Example~\ref{ZR-not-coherent--ex}). This gives evidence to a conjecture by Schlichting (which was shown to be false at the generality it was stated), see section~\ref{schlichting-conjecture--remark}.

\vquad\noindent\textbf{Notation.}
A scheme is said to be \df{divisorial} iff it admits an ample family of line bundles \cite[2.1.1]{tt90}; such schemes are quasi-compact and quasi-separated.

\vquad\noindent\textbf{Acknowledgements.}
I thank Andrew Kresch for providing a fruitful research environment at the University of Zurich where most of this paper was written as well as Georg Tamme, Matthew Morrow, and Moritz Kerz for helpful conversations. Furthermore, I want to thank Marco Schlichting for pointing out the reference to Hiranouchi-Mochizuki in the proof of Theorem~\ref{ZR_K-theory_support_g-theory--thm}, Quentin Guignard for pointing out a mistake in a previous version as well as Elden Elmanto and Katharina Hübner for some helpful remarks. Last, but not least, I thank the referee for their very detailled reports which pointed out some gaps in my proofs and led to a much improved presentation.

\section{Admissible Zariski-Riemann spaces}
\label{sec-ZR-spaces}

\begin{notation*}
In this section let $X$ be a \emph{reduced} quasi-compact and quasi-separated scheme and let $U$ be a quasi-compact open subscheme of $X$.
\end{notation*}

\begin{defin} \label{U-admissible_ZR-space--defin} \label{ZR-space--defin}
A \df{U-modification} of $X$ is a proper morphism $X'\to X$ of schemes which is an isomorphism over $U$. The category of $U$-modifications of $X$ is defined with morphisms over $X$ and denoted by $\Mdf(X,U)$. The \df{U-admissible Zariski-Riemann space} of $X$ is defined as the limit
	\[
	\skp{X}_U = \lim_{X'\in\Mdf(X,U)} X'
	\]
in the category of locally ringed spaces. This limit exists, its underlying topological space is coherent and sober, and for any $X'\in\Mdf(X,U)$ the projection map $\skp{X}_U\to X'$ is quasi-compact; see Proposition~\ref{limit_spaces--prop} in the appendix.
\end{defin}

\begin{example}
Let $V$ be a valuation ring with fraction field $K$. Then the canonical projection $\skp{\Spec(V)}_{\Spec(K)} \to \Spec(V)$ is an isomorphism as every $\Spec(K)$-modification of $\Spec(V)$ is split according to the valuative criterion for properness.
\end{example}

\begin{lemma} \label{modifications-cofinal-subcats--lemma}
The following are cofinal subcategories of $\Mdf(X,U)$:
\begin{enumerate}
\item The full subcategory spanned by morphisms $X'\to X$ where $X'$ is reduced.
\item If $U$ is schematically dense in $X$: the full subcategory spanned by $U$-admissible blow-ups, \ie{} blow-ups whose centre does not meet $U$. 
\item If $U$ is schematically dense in $X$: the full subcategory spanned by projective morphisms $X'\to X$.
\end{enumerate} 
In particular, under these assumtions the respective limits over these subcategories agree with the $U$-admissible Zariski-Riemann space. 
\end{lemma}
\begin{proof}
(i) is \cite[3.5]{thesis-paper}, (ii) is \cite[3.4]{thesis-paper}, and (iii) follows from (ii).
\end{proof}

\begin{remark} \label{structure-on-tilde-Z--remark}
For the choice of a scheme-structure $Z$ on the closed complement $X\setminus U$, the complement $\skp{X}_U\setminus U$ comes equipped with the structure of a locally ringed space, namely
	\[
	\skp{X}_U \setminus U = \lim_{X'\in\Mdf(X,U)} X'_Z
	\]
where $X'_Z := X'\times_XZ$.
\end{remark}

\begin{remark}
Let $Z$ be a scheme-structure on $X\setminus U$. One can define the \emph{Zariski-Riemann space} $\skp{Z}$ to be the limit over all projective and \emph{birational} morphism $Z'\to Z$ in the category of locally ringed spaces. In general, there does not exist a canonical morphism 
	\[
	\skp{Z} \not\To \skp{X}_U \setminus U
	\]
of locally ringed spaces. Although for $X'\in\Mdf(X,U)$ and a $U$-admissible blow up $\Bl_Y(X')\to X'$ there exists a canonical morphism
	\[
	\Bl_Y(X'_Z) \To \Bl_Y(X'),
	\]
the problem is that the blow-up $\Bl_Y(X'_Z) \to X'_Z$ needs not to be birational in case that $X'_Z \setminus Y$ is not dense in $X'_Z$.
\end{remark}

\subsection*{Comparison to Temkin's relative Riemann-Zariski spaces}
\begin{remark}
Temkin \cite{temkin-relative-zr} introduced the notion of  a \emph{relative Riemann-Zariski\footnote{The author apologises for the switched order of the names ``Zariski'' and ``Riemann'' which tries to be coherent with the different sources.} space} associated with any separated morphism $f\colon Y\to X$ between quasi-compact and quasi-separated schemes. In \emph{loc.~cit.}\ a $Y$-modification is a factorisation
	\[
	Y \To[f_i] X_i \To[g_i] X
	\]
of $f$ where $f_i$ is schematically dominant and $g_i$ is proper. The family of these $Y$-modifications is cofiltered and the \df{relative Riemann-Zariski space} of the morphism $f\colon Y\to X$ is defined as the cofiltered limit
	\[
	\RZ_Y(X) := \lim_i\, X_i
	\]
which is indexed by all $Y$-modifications. 
\end{remark}

\begin{lemma}
If $U$ is a dense in $X$, then there exists a canonical morphism
	\[
	\RZ_U(X) \To \skp{X}_U
	\]
which is an isomorphism.
\end{lemma}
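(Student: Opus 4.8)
The plan is to compare the two cofiltered systems defining the limits and to show that the natural functor between their index categories is cofinal. Since $\BL(X,U)$ is cofinal in $\Mdf(X,U)$, we have $\skp{X}_U = \lim_{X'\in\BL(X,U)} X'$, so it suffices to relate $\BL(X,U)$ to the category of $U$-modifications in Temkin's sense.

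First I would show that every $U$-admissible blow-up $X'=\Bl_Z(X)\to X$, with $Z\subseteq X\setminus U$ closed, is a $U$-modification in Temkin's sense. The scheme $X'$ is reduced, because its Rees algebra $\bigoplus_{n\geq 0}\mathcal I_Z^{\,n}$ is a subsheaf of rings of $\O_X[t]$; the exceptional divisor $E\subseteq X'$ is an effective Cartier divisor, hence has empty interior in the reduced scheme $X'$, so that $X'\setminus E = X\setminus Z$ is dense in $X'$. As $U$ is dense in $X$ it is dense in the open subscheme $X\setminus Z$, hence in $X'$; since $U$ is then a dense open subscheme of the reduced scheme $X'$, the open immersion $U\hookrightarrow X'$ is schematically dominant. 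As $X'\to X$ is moreover proper, $(U\to X'\to X)$ is a Temkin $U$-modification. This assignment is functorial for morphisms over $X$, so it defines a functor $\Phi$ from $\BL(X,U)$ to the category of Temkin $U$-modifications and, by projecting, the canonical morphism $\RZ_U(X)\to\skp{X}_U$.

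The heart of the matter — and the step I expect to require the most care — is that $\Phi$ is cofinal, for which it suffices to dominate every Temkin $U$-modification $U\To[f] X_i\To[g] X$ by a $U$-admissible blow-up over $X$. I would first observe that $g$ is automatically an isomorphism over $U$: the morphism $f$ factors through the open subscheme $g\inv(U)=U\times_X X_i$, giving a section $s\colon U\to g\inv(U)$ of the proper, hence separated, morphism $g\inv(U)\to U$; such a section is a closed immersion, so $s(U)$ is closed in $g\inv(U)$. Since $f$ is schematically dominant and quasi-compact, its scheme-theoretic image is all of $X_i$, and intersecting this with the open subscheme $g\inv(U)$ identifies the latter with the scheme-theoretic image of $s$, namely $s(U)$; hence $g\inv(U)=s(U)\iso U$, and in particular $g\inv(U)$ is dense in $X_i$. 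Granting this, Chow's lemma — applied so as to refine $X_i$ away from the dense open $g\inv(U)$, over which the morphism to $X$ is already an isomorphism — yields a $U$-modification $X_i'\to X$ dominating $X_i$ (it is proper and factors through a projective space over $X$, hence projective, and is an isomorphism over $U$), which is in turn dominated by some $X''\in\BL(X,U)$ since $\BL(X,U)$ is cofinal in $\Mdf(X,U)$; thus $X''$ dominates $X_i$ over $X$. Connectedness of the relevant comma categories then follows from cofilteredness of $\BL(X,U)$ together with the fact that two $X$-morphisms out of a reduced $U$-admissible blow-up which agree on the schematically dense open $U$ must coincide. Hence $\Phi$ is cofinal, the two limits agree, and the canonical morphism $\RZ_U(X)\to\skp{X}_U$ is an isomorphism of locally ringed spaces. (Alternatively, one may invoke that Temkin already describes $\RZ_U(X)$ as the limit over $U$-admissible blow-ups, reducing the statement to the cofinality of $\BL(X,U)$ in $\Mdf(X,U)$.)
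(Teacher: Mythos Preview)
Your argument is correct and follows the same overall plan as the paper --- exhibit a functor from a cofinal subcategory of $\Mdf(X,U)$ into Temkin's $U$-modifications and prove cofinality --- but the cofinality step is handled differently. The paper does not first verify that a Temkin modification $X'\to X$ is an isomorphism over $U$; instead it takes a Nagata compactification $U\hookrightarrow\bar X'\to X'$ of the schematically dominant map $U\to X'$ and argues that $\bar X'\times_XU\to U$ is an isomorphism (the section $U\hookrightarrow\bar X'\times_XU$ is open, closed as a section of a separated map, and dense once one replaces $\bar X'$ by the closure of $U$). Your route instead extracts the isomorphism over $U$ directly from schematic dominance via the scheme-theoretic image argument and then invokes Chow's lemma. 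Your decision to work through $\BL(X,U)$ rather than all of $\Mdf(X,U)$ is well taken: a general object of $\Mdf(X,U)$ need not have $U$ schematically dense (e.g.\ $X\sqcup\{\mathrm{pt}\}\to X$ with the extra point landing in $X\setminus U$), so the paper's assertion that \emph{every} $U$-modification is a Temkin $U$-modification is slightly too strong, though harmless after passing to blow-ups. One point to watch in your version is that Chow's lemma relative to a prescribed dense open over a merely qcqs base requires some care (noetherian approximation, or a Raynaud--Gruson style statement); your parenthetical fallback to Temkin's own blow-up description of $\RZ_U(X)$ is the cleanest way around this --- and in fact the paper's Nagata argument, which produces only a \emph{proper} $\bar X'\to X$, tacitly needs the same passage back to projectivity to land in $\Mdf(X,U)$.
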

\begin{proof}
If $X$ is reduced, then $U$ is dense in $X$ if and only if it is schematicaly dense in $X$ \stacks{056D}. Hence every $U$-modification in our sense is also a $U$-modification in Temkin's sense with respect to the inclusion $U\inj X$. Given a $U$-modification $U \to[g] X' \to[p] X$ in Temkin's sense, we take a Nagata compactification $U \inj[j] \bar{X}' \to[q] X'$, \ie{} $j$ is an open immersion, $q$ is proper, and $g=q\circ j$ \stacks{0F41}. Then the base change $(p\circ q)\times_XU \colon \bar{X}'\times_XU \to U$ is bijective and split by an open immersion, hence an isomorphism. Thus the $U$-modifications in our sense are cofinal within the $U$-modifications in Temkin's sense.   
\end{proof}

\section{Modules on admissible Zariski-Riemann spaces}

The following results depend on Raynaud-Gruson's \emph{platification par \'eclatement} \cite[5.2.2]{raynaud-gruson}. These results and their proofs are modified versions of results of Kerz-Strunk-Tamme who considered birational and projective schemes over $X$ instead of $U$-modifications,  \cf{} Lemma~6.5 and Proof of Proposition~6.4 in \cite{kst}.

\begin{notation*}
In this section let $X$ be a \emph{reduced} quasi-compact and quasi-separated scheme and let $U$ be a quasi-compact open subscheme of $X$. For any locally ringed space $(Y,\O_Y)$ we denote by $\Modfp(Y)$ the full subcategory of all $\O_Y$-modules spanned by finitely presented objects, \ie{} modules locally of finite presentation.
\end{notation*}

\begin{lemma} \label{ZR--fpmodules-colimit--lemma}
The canonical functor
	\[
	\colim_{X'\in\Mdf(X,U)} \Modfp(X') \To \Modfp(\skp{X}_U).
	\]
is an equivalence (within the 2-category of categories).
\end{lemma}
\begin{proof}
This is a general fact about limits of coherent and sober locally ringes spaces with quasi-compact transition maps, see \cite[ch.~0, 4.2.1--4.2.3]{fuji-kato} and Proposition~\ref{limit_spaces--prop}.
\end{proof}

This equivalence restricts to the full subcategories of vector bundles, \ie{} locally free modules of finite rank.

\begin{lemma} \label{ZR-vect-colimit--lemma}
The canonical functor
	\[
	\colim_{X'\in\Mdf(X,U)} \Vect(X') \To \Vect(\skp{X}_U)
	\]
is an equivalence (within the 2-category of categories).
\end{lemma}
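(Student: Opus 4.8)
The plan is to bootstrap from the equivalence $\colim_{X'\in\Mdf(X,U)}\Modfp(X')\to\Modfp(\skp{X}_U)$ of the preceding lemma; only essential surjectivity of the restricted functor needs genuine input, namely Raynaud--Gruson's \emph{platification par \'eclatement}. The formal part is quick: the functor lands in $\Vect(\skp{X}_U)$ because the pullback of a locally free module along a morphism of locally ringed spaces is locally free; and it is fully faithful because for each $X'$ the inclusion $\Vect(X')\hookrightarrow\Modfp(X')$ is fully faithful and commutes with the pullback transition functors, so $\colim_{X'}\Vect(X')\to\colim_{X'}\Modfp(X')$ is fully faithful ($\Hom$ in a filtered colimit of categories being the colimit of the $\Hom$-sets); one then composes with the equivalence of the preceding lemma and uses that $\Vect(\skp{X}_U)\hookrightarrow\Modfp(\skp{X}_U)$ is full.

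For essential surjectivity, take a vector bundle $\Fcal$ on $\skp{X}_U$. By the preceding lemma $\Fcal\cong(f')^{*}\Fcal'$ for some $X'\in\Mdf(X,U)$ and some finitely presented $\O_{X'}$-module $\Fcal'$, where $f'\colon\skp{X}_U\to X'$ denotes the projection. Since $X'\to X$ is an isomorphism over $U$, the open $U$ sits inside both $X'$ and $\skp{X}_U$, and $\Fcal'|_U\cong\Fcal|_U$ is locally free; in particular $\Fcal'$ is flat over $U$. Then \emph{platification par \'eclatement} \cite[5.2.2]{raynaud-gruson} furnishes a $U$-admissible blow-up $\pi\colon X''\to X'$ along which the strict transform $\Ecal$ of $\Fcal'$ is finitely presented and flat over $X''$, hence a vector bundle on $X''$. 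Since $X''\to X$ is again a $U$-modification (it is a composition of projective morphisms over the quasi-compact scheme $X$, so projective, and an isomorphism over $U$), there is a projection $f''\colon\skp{X}_U\to X''$ with $f'=\pi\circ f''$; and the canonical epimorphism $q\colon\pi^{*}\Fcal'\twoheadrightarrow\Ecal$, whose kernel is supported on the exceptional divisor and thus away from $U$, is an isomorphism over $U$.

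It remains to see that $(f'')^{*}\Ecal\cong\Fcal$. Pulling $q$ back along $f''$ yields an epimorphism $\Fcal\cong(f'')^{*}\pi^{*}\Fcal'\twoheadrightarrow(f'')^{*}\Ecal$ of vector bundles on $\skp{X}_U$ which is an isomorphism over $U$, and I would conclude it is an isomorphism as follows. The $U$-modifications in which $U$ is schematically dense are cofinal in $\Mdf(X,U)$ --- replace $X'$ by the scheme-theoretic closure of $U$ in it --- and, by the standard description of quasi-compact opens of a cofiltered limit of spectral spaces (Lemma~\ref{ZR-coherent_sober--lem}, Proposition~\ref{limit_spaces--prop}), every nonempty quasi-compact open of $\skp{X}_U$ descends to such a modification and therefore meets $U$; so $U$ is dense in $\skp{X}_U$. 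Hence the locally constant rank functions of $\Fcal$ and $(f'')^{*}\Ecal$, which agree on the clopen locus where they coincide and that locus contains $U$, agree everywhere; at every point the map is therefore a surjection between free modules of equal finite rank over the local ring there, hence an isomorphism. Thus $\Fcal\cong(f'')^{*}\Ecal$ with $\Ecal\in\Vect(X'')$, which completes essential surjectivity.

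The only nonformal ingredient, and the step I expect to be the crux, is the flattening: upgrading the merely ``flat over $U$'' finitely presented model $\Fcal'$ to a genuinely locally free one on a further $U$-modification, which is exactly what \emph{platification par \'eclatement} supplies. Everything afterwards --- in particular checking that the flattened model still pulls back to $\Fcal$ --- is soft once one knows $U$ is dense in $\skp{X}_U$.
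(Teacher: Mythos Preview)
Your proof is correct, but it takes a different route from the paper's. The paper proves essential surjectivity by an elementary gluing argument: since $\skp{X}_U$ is coherent, a vector bundle $F$ is trivialised on a finite cover by quasi-compact opens; by induction one reduces to two such opens, descends each of them together with the trivial bundles to some $X_0\in\Mdf(X,U)$, and then uses the description of $\Hom$ in the filtered colimit to descend the gluing isomorphism to a further $X_\alpha$. No platification is needed.

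You instead descend $F$ to a finitely presented module on some $X'$ and then invoke Raynaud--Gruson to flatten it on a $U$-admissible blow-up $X''$, afterwards checking that the strict transform still pulls back to $F$ via a density-of-$U$ argument. This works, and the density step (cofinality of those $U$-modifications in which $U$ is schematically dense, hence $U$ dense in $\skp{X}_U$) is a correct and pleasant observation. The trade-off is that you are deploying the heaviest tool of the section --- the very \emph{platification par \'eclatement} that the paper reserves for the genuinely harder Lemma~\ref{platification-support-lemma} --- to prove a statement that yields to a direct descent-and-glue argument. The paper's approach buys elementarity and keeps this lemma independent of Raynaud--Gruson; yours buys uniformity with the later arguments but at the cost of an unnecessary appeal to a deep theorem and an extra verification (that the strict transform pulls back correctly) which the gluing approach sidesteps entirely.
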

\begin{proof}
Clearly, the pullback of a vector bundle is again a vector bundle. Hence fully faithfulness follows from the corresponding statement for finitely presented modules.
It remains to show that the functor is essentially surjective, so let $F$ be a locally free $\O_{\skp{X}_U}$-module of finite rank. Since the topological space $\skp{X}_U$ is coherent, we may assume that there exists a finite cover $\skp{X}_U = V_1 \cup \ldots\cup V_k$ of quasi-compact open subsets such that $F|_{V_i} \cong \O_{V_i}^{n_i}$ for all $i$ and suitable natural numbers $n_i$. Hence we can argue by induction on $k$ and reduce to the case when $k=2$. There exists a $U$-modification $X_0$ and quasi-compact open subsets $V_1'$ and  $V_2'$ of $X_0$ such that $V_i = p_0\inv(V_i')$ for $i=1,2$ and $V_1\cap V_2 = p_0\inv(V_1'\cap V_2')$ and there exists $\O_{V_i'}$-modules $G_i$ such that $p_0^*G_i \cong F|_{V_i}$ where $p_0 \colon \skp{X}_U \to X_0$ denotes the canonical projection. By general properties of sheaves on limits of locally ringed spaces we have a bijection 
	\[
	\colim_{X_\alpha\in\Mdf(X_0,U)} \Hom_{X_\alpha}(p_{\alpha,0}^*G_1,p_{\alpha,0}^*G_2) \To[\cong] \Hom_{\skp{X}_U}(F|_{V_1},F|_{V_2}).
	\]
where $p_{\alpha,0} \colon X_\alpha\to X_0$ is the transition map \cite[ch. 0, Thm. 4.2.2]{fuji-kato}. Hence there exists an $\alpha$ such that $p_{\alpha,0}^*G_1$ and $p_{\alpha,0}^*G_2$ glue to a locally free sheaf $G$ on $X_\alpha$ which further pulls back to $F$ which shows that the functor in question is essentially surjective.
\end{proof}

\begin{defin} \label{defin-pseudo-coherent}
Let $(Y,\O_Y)$ be a locally ringed space and let $n\geq 0$. An $\O_Y$-module $F$ is said to be \df{pseudo-coherent of Tor-dimension $\mathbf{\leq n}$} iff there exists an exact sequence
	\[
	0 \to E_n\to \ldots \to E_1 \to E_0\to F\to 0
	\]
where $E_n,\ldots, E_1, E_0$ are vector bundles (\ie{} locally free $\O_Y$-modules of finite rank). Denote by $\Mod^{\leq n}(Y)$ and $\Coh^{\leq n}(Y)$ the full subcategories  of $\Mod(Y)$ \resp{} $\Coh(Y)$ spanned by pseudo-coherent $\O_Y$-modules of Tor-dimension $\leq n$.
\end{defin}

\begin{lemma}[{\cf{} \cite[6.5 (i)]{kst}}] \label{leq1-exact--lemma}
Assume that $U$ is dense in $X$. Then:
\begin{enumerate}
\item For every $U$-modifi"-cation $p\colon X' \to X$ the pullback functor preserves modules of Tor-dimension $\leq 1$ and the restricted functor
	\[
	p^* \Colon \Mod^{\mathrm{fp},\leq 1}(X) \To \Mod^{\mathrm{fp},\leq 1}(X')
	\]
is exact.
\item For every morphism $\phi\colon F\to G$ in $\Modfp(Y)$ such that $F, G$, and $\coker(\phi)$ all lie in $\Modfpleq(Y)$ and for every $U$-modification $p\colon X'\to X$ with $X'$ reduced the canonical maps
		\[
		q^*(\ker(\phi)) \to \ker(q^*\phi) \quad\text{ and } \quad q^*(\im(\phi)) \to \im(q^*\phi)
		\]
are isomorphisms.
\end{enumerate}
\end{lemma}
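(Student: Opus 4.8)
The statement to prove concerns the behaviour of kernels and images under pullback along a $U$-modification $p\colon X'\to X$, assuming $U$ is dense in $X$. The plan is to reduce everything to the affine case and then apply Raynaud--Gruson's \emph{platification par \'eclatement} in the form that is already standard from \cite{kst}. For part (i), let $0\to F'\to F\to F''\to 0$ be a short exact sequence in $\Modfpleq(X)$. Pullback is always right exact, so the only issue is left-exactness, \ie{} injectivity of $p^*F'\to p^*F$. First I would reduce to $X=\Spec(A)$ affine and $X'$ affine over $X$; since $F'$ has Tor-dimension $\leq 1$, choose a presentation $0\to A^m\to A^n\to F'\to 0$. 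The key point is that the Tor-sheaves $\mathcal{T}or_1^{\O_X}(F'',\O_{X'})$ are what obstruct exactness, and one shows they vanish because $F''$ has Tor-dimension $\leq 1$ and, crucially, $F''$ is supported away from $U$ while $p$ is an isomorphism over $U$. Concretely: $\mathcal{T}or_1(F'',\O_{X'})$ is a finitely presented $\O_{X'}$-module which vanishes over $p^{-1}(U)$, which is scheme-theoretically dense in $X'$ because $X'\to X$ is an isomorphism over the dense open $U$ and $X'$ is (after Lemma~\ref{ZR-reduced-lem}) reduced; a finitely presented module on a reduced scheme that vanishes on a scheme-theoretically dense open subset is zero. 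Actually the cleanest route is: a finitely presented $\O_X$-module of Tor-dimension $\leq 1$ which is generically zero (\ie{} zero on the dense open $U$, hence torsion) has a presentation by a map of vector bundles that is an isomorphism over $U$; pulling back along $p$ keeps it an isomorphism over $p^{-1}(U)$, and then one invokes that on a reduced scheme an injection of locally free sheaves that is an isomorphism on a dense open stays injective.

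For part (ii), given $\phi\colon F\to G$ with $F,G,\coker(\phi)$ all in $\Modfpleq(Y)$ — here I read $Y$ as $X$ — we have two short exact sequences
\[
0\to \ker(\phi)\to F\to \im(\phi)\to 0,\qquad 0\to \im(\phi)\to G\to\coker(\phi)\to 0.
\]
From the second sequence and the hypothesis on $G$ and $\coker(\phi)$, one first checks that $\im(\phi)$ itself lies in $\Modfpleq(X)$: it is finitely presented (as the image of a map of coherent modules on a qcqs scheme), and its Tor-dimension is $\leq 1$ by the long exact Tor sequence since $G$ and $\coker(\phi)$ have Tor-dimension $\leq 1$. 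Now apply part (i) to both short exact sequences: $p^*$ is exact on them, so $p^*(\im(\phi))\to p^*G$ is injective with the expected cokernel, whence $p^*(\im\phi)\xrightarrow{\cong}\im(p^*\phi)$; and $p^*(\ker\phi)\to p^*F$ is injective with cokernel $p^*(\im\phi)=\im(p^*\phi)$, whence $p^*(\ker\phi)\xrightarrow{\cong}\ker(p^*\phi)$. The reducedness hypothesis on $X'$ enters precisely to guarantee that "generically zero, finitely presented, Tor-dimension $\leq 1$" forces vanishing (equivalently, that $p^{-1}(U)$ is scheme-theoretically dense in $X'$).

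The main obstacle will be the localisation/presentation bookkeeping needed to turn "Tor-dimension $\leq 1$ and generically zero" into a clean statement about maps of vector bundles, and making sure the density of $U$ in $X$ genuinely propagates to scheme-theoretic density of $p^{-1}(U)$ in $X'$ after passing to the reduced structure — this is where \stacks{056D} (for reduced schemes, topological density equals scheme-theoretic density) is invoked, together with the fact that a $U$-modification is an isomorphism over $U$. Everything else is formal manipulation of short exact sequences and the long exact Tor sequence, so I expect the proof to mirror \cite[6.5]{kst} almost verbatim, the only new input being that "birational" is replaced by "isomorphism over the dense open $U$", which is exactly what makes the generic-vanishing argument go through.
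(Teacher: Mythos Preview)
Your ``cleanest route'' lands on the right mechanism and is essentially what the paper does: for $F\in\Modfpleq(X)$ take a two-term locally free resolution $0\to E_1\to E_0\to F\to 0$, pull it back, and show that $p^*E_1\to p^*E_0$ remains injective by checking stalks on the (reduced) $X'$ --- the map is injective at each generic point $\eta$ because over $p^{-1}(U)\cong U$ it agrees with the original injective map, and for any specialisation $x$ of $\eta$ the local ring $\O_{X',x}$ embeds into $\O_{X',\eta}$ \cite[3.29]{gw10}; the nine lemma then upgrades this to exactness on arbitrary conflations. Your treatment of part~(ii) via the two short exact sequences is exactly the paper's ``follows directly from~(i)'', just written out.

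That said, several of the statements surrounding this core are incorrect or irrelevant and would have to be excised:
\begin{itemize}
\item Nothing in part~(i) assumes $F''$ is supported away from $U$; the objects of $\Modfpleq(X)$ carry no support condition. The sheaf $\mathcal{T}or_1(F'',\O_{X'})$ vanishes over $p^{-1}(U)$ simply because $p$ is an isomorphism (hence flat) there, not because of any hypothesis on the support of $F''$.
\item The assertion ``a finitely presented module on a reduced scheme that vanishes on a scheme-theoretically dense open subset is zero'' is false (e.g.\ $\Z/p$ on $\Spec(\Z)$). What is true --- and what is actually used --- is that a map of \emph{locally free} sheaves on a reduced scheme which is injective on a dense open is injective everywhere; equivalently, a subsheaf of a locally free sheaf supported on a nowhere-dense closed set vanishes.
\item In your ``cleanest route'' the resolving map $E_1\to E_0$ is not an isomorphism over $U$ (that would force $F|_U=0$, which is not assumed); it is merely injective there, and injectivity on a dense open is all the argument requires.
\item Raynaud--Gruson \emph{platification par \'eclatement} plays no role in this lemma; it only enters in the subsequent Lemma~\ref{platification-support-lemma}. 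There is likewise no need to reduce to the affine case.
\end{itemize}
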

\begin{proof}
(i) If $F$ is a $\O_X$-module of Tor-dimension $\leq 1$, there exists an exact sequence $0 \to E_1 \to[\phi] E_0 \to F \to 0$ where $E_1$ and $E_0$ are $\O_X$-vector bundles. Then the pulled back sequence 
	\[
	0 \To p^*E_1 \To[p^*\phi] p^*E_0 \To p^*F \To 0
	\]
is exact at $p^*E_0$ and $p^*F$. We claim that the map $p^*\phi$ is injective. The $\O_{X'}$-modules $p^*E_1$ and $p^*E_0$ are locally free, say of rank $n$ and $m$, respectively. Let $\eta$ be a generic point of an irreducible component of $X'$. Since $U$ is dense, the map $(p^*\phi)_\eta \colon \O_{X',\eta}^n \to \O_{X',\eta}^m$ is injective since it identifies with the injective map $\phi_{p(\eta)} \colon (E_1)_{p(\eta)} \inj (E_0)_{p(\eta)}$. By Lemma~\ref{modifications-cofinal-subcats--lemma}~(i), we may assume that $X'$ is reduced. For a general point $x\in X'$, the stalk $\O_{X',x}$  embeds into $\prod_\eta \O_{X',\eta}$ where the product is over the generic points of irreducible components of $X'$. Hence the induced map $(p^*\phi)_{x} \colon \O_{X',x}^n \to \O_{X',x}^m$ is injective. Thus $p^*\phi$ is injective at every point of $X'$, hence injective. Now the exactness of $p^*$ follows from the nine lemma.

(ii) This follows directly from (i).
\end{proof}

\begin{defin} \label{support--def}
Let $(Y,\O_Y)$ be a locally ringed space, $Z$ a closed subset of $Y$, and $j\colon (V,\O_V) \inj (Y,\O_Y)$ the inclusion of the open complement. An $\O_Y$-module $F$ has \df{support in} $Z$ iff $j^*F$ vanishes. Denote with a ``$Z$'' in the lower index the full subcategory of those $\O_Y$-modules which have support in $Z$, \eg{} $\Modfp_Z(Y)\subset \Modfp(Y)$.
\end{defin}

\begin{defin} \label{nilu--defin}
Let $C$ be a pointed category. We denote by $\Nilu(C)$ the category whose objects are pairs $(F,\nu)$ where $F$ is an object of $C$ and $\nu\colon F\to F$ is a nilpotent endomorphism, \ie{} there exists a $k$ such that $\nu^k=0$ in the pointed set $\Hom_C(F,F)$. The morphisms are given by those morphisms in $C$ which are compatible with the respective endomorphisms.
\end{defin}

\begin{lemma}[{\cf{} \cite[6.5]{kst}}] \label{platification-support-lemma}
Assume that $X$ is divisorial and that $U$ is dense in $X$. Let $f\colon Y\to X$ be a quasi-projective morphism of finite presentation and let $Z=Y\setminus(Y\times_XU)$. Given a $U$-modification $p\colon X'\to X$, we denote by $q\colon Y'=Y\times_XX'\to Y$ the pullback of $p$ along $f$.
\begin{enumerate}
	\item For every $F\in\Modfp_Z(Y)$ there exists a $U$-modification $p\colon X'\to X$ such that $q^*F$ lies in $\Modfpleq(Y')$.
	\item For every morphism $\phi\colon F\to G$ in $\Modfp_Z(Y)$ there exists a $U$-modification $p\colon X'\to X$ such that $q^*F, q^*G, \ker(q^*\phi), \im(q^*\phi)$, and $\coker(q^*\phi)$ all lie in $\Modfpleq(Y')$.
	\item For every $(F,\nu)\in\Nilu(\Coh_Z(Y))$ there exists a $U$-modification such that there exists a finite resolution
		\[
		0 \to (E_k,\nu_k) \to \ldots \to (E_0,\nu_0) \to (q^*F,q^*\nu) \to 0
		\]
where all $(E_i)$ are locally free of finite rank.		
\end{enumerate}
\end{lemma}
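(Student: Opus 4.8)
All three statements are the $U$-admissible analogues of \cite[Lemma~6.5 and the proof of Proposition~6.4]{kst}, and I would argue in the same way. The content lies entirely in (i); parts (ii) and (iii) then follow formally, using that $\Mdf(X,U)$ is cofiltered and that $\Modfpleq$ is closed under kernels of epimorphisms between its objects.

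For (i) I would start with a harmless reduction. Since $X$ is reduced and $f$ is smooth, $Y$ is reduced, and since $U$ is dense and $f$ is flat, $Y\times_XU$ is dense in $Y$, so $Z$ is nowhere dense. Blowing up $X$ along a closed subscheme supported on $X\setminus U$ is a $U$-modification, so after replacing $X$ by such a blow-up I may assume that $X\setminus U$ is the support of an effective Cartier divisor $D$; then $f^{*}D$ is an effective Cartier divisor on $Y$ with support $Z$ and with local equations that are nonzerodivisors. As $F$ is finitely presented with support in $Z$, it is annihilated by a power of the invertible ideal $f^{*}\O_X(-D)$, uniformly over $Y$ by quasi-compactness, so $F$ comes from a finitely presented module on an infinitesimal thickening $Y_N=Y\times_XX_N$ of $Z$, with $Y_N\to X_N$ still smooth. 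Next I choose a surjection $E_0\surj F$ from a locally free sheaf (available because $Y$, being quasi-projective over the divisorial scheme $X$, is divisorial) and set $K=\ker(E_0\surj F)$; over $Y\times_XU$ one has $K=E_0$, so $K$ is flat over $X$ at every point above $U$. The heart of the matter is to feed $K$ --- and not $F$ itself --- to Raynaud--Gruson's platification par \'eclatement \cite[5.2.2]{raynaud-gruson}: it produces a $U$-admissible blow-up $p\colon X'\to X$ such that, writing $q\colon Y'=Y\times_XX'\to Y$ for the base change of $p$ along $f$, the strict transform of $K$ is flat over $X'$. Since $K\subseteq E_0$ with $E_0$ locally free, this strict transform is $K':=\im(q^{*}K\to q^{*}E_0)$, and $q^{*}E_0/K'=q^{*}F$, so one obtains a short exact sequence $0\to K'\to q^{*}E_0\to q^{*}F\to 0$ with $q^{*}E_0$ locally free and $K'$ finitely presented and flat over $X'$. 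Finally I would upgrade ``$K'$ flat over $X'$'' to ``$K'$ locally free on $Y'$'': away from $Z'=(f')^{-1}(X'\setminus U)$ this is immediate since there $K'=q^{*}E_0$, and along $Z'$ one argues locally, where $Z'$ is cut out by a single nonzerodivisor and $q^{*}F$ is killed by a power of it, so that the relative flatness forces $q^{*}F$ to be locally free on the thickened divisor and hence of Tor-dimension $\leq 1$ on $Y'$. The above sequence is then the required resolution and $q^{*}F\in\Modfpleq(Y')$.

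For (ii), given $\phi\colon F\to G$ in $\Modfp_Z(Y)$ I note that $\coker(\phi)$ is again finitely presented with support in $Z$; applying (i) to $F$, $G$ and $\coker(\phi)$ and using cofilteredness of $\Mdf(X,U)$ I obtain a single $p\colon X'\to X$ with $q^{*}F$, $q^{*}G$ and $q^{*}\coker(\phi)=\coker(q^{*}\phi)$ all in $\Modfpleq(Y')$, and then the exact sequences $0\to\im(q^{*}\phi)\to q^{*}G\to\coker(q^{*}\phi)\to0$ and $0\to\ker(q^{*}\phi)\to q^{*}F\to\im(q^{*}\phi)\to0$ force $\im(q^{*}\phi)$ and $\ker(q^{*}\phi)$ into $\Modfpleq(Y')$ as well. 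For (iii), I would apply (i) to $F$ to pass to a $U$-modification over which $q^{*}F$ admits a length-one locally free resolution $0\to E_1\to E_0\to q^{*}F\to0$, lift $\nu$ to an endomorphism $\nu_0$ of the projective module $E_0$ over $\nu$ (which then restricts to $\nu_1$ on $E_1=\ker(E_0\to q^{*}F)$), and correct this lift to a genuinely nilpotent one at the cost of finitely many further locally free terms --- as in the resolution theorem for nil-categories, finiteness of the nilpotency degree ensuring termination --- thereby producing $0\to(E_k,\nu_k)\to\cdots\to(E_0,\nu_0)\to(F,\nu)\to0$.

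The step I expect to cause the real trouble is the use of \cite[5.2.2]{raynaud-gruson} in (i) together with its aftermath: platification must be applied to the syzygy $K$ (the strict transform of $F$ itself vanishes, $F$ being supported on the exceptional locus), and then one has to extract an honest length-one locally free resolution from mere relative flatness --- which is exactly where the reduction to ``$Z$ is a Cartier divisor'' enters --- while keeping careful track of the scheme structures on $Z$ (\cf{} Remark~\ref{structure-on-tilde-Z--remark}).
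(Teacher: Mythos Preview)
For parts (i) and (ii) your strategy coincides with the paper's: pick a presentation $E_1\to E_0\to F\to 0$ by vector bundles and feed the first syzygy $K=\ker(E_0\surj F)=\im(E_1\to E_0)$ to Raynaud--Gruson, then identify its strict transform with $\im(q^*\phi)\subseteq q^*E_0$ (this is Remark~\ref{strict-trafo--rem}). The paper does not perform your Cartier-divisor reduction; it applies platification and simply writes ``flat, \ie{} locally free''. Your argument for (ii) via the two short exact sequences is precisely what the paper's appeal to Lemma~\ref{leq1-exact--lemma} unpacks to.

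The step you rightly flag as delicate --- upgrading ``$K'$ flat over $X'$'' to ``$K'$ locally free on $Y'$'' --- is where your proposal has a genuine gap, and your Cartier-divisor argument does not close it. Your claim that ``relative flatness forces $q^*F$ to be locally free on the thickened divisor'' fails already for $X=\A^1_k=\Spec k[s]$, $U=\Gm$, $Y=\A^2_k=\Spec k[s,t]$ with $f$ the projection, and $F=k[s,t]/(s,t)$: here $K=(s,t)$ is already flat over $X$ (torsion-free over the PID $k[s]$), so platification returns the trivial blow-up, yet $F$ restricted to the divisor $V(s)\cong\A^1_t$ is the skyscraper $k[t]/(t)$, not locally free, and $F$ has Tor-dimension $2$. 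Since every $U$-admissible blow-up of $\A^1$ is an isomorphism, no $U$-modification of $X$ repairs this. The paper's bare ``flat, \ie{} locally free'' has the same issue for general $Y$; the step is only unproblematic when $Y=X$, because then flat over $X'$ is flat over $Y'$ and hence locally free for finitely presented modules --- and $Y=X$ is the only case used downstream (Lemma~\ref{leq1_ZR--lem}, Proposition~\ref{ZR-support-fp=coh--lem}).

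For part (iii) you diverge from the paper, and your route is not worked out. The paper argues by induction on the nilpotency degree $k$: apply (ii) to $\nu^{k-1}$ so that, after a $U$-modification, $\ker(q^*\nu^{k-1})$ and $\im(q^*\nu^{k-1})$ lie in $\Modfpleq$; the restricted endomorphisms on these have nilpotency degree $<k$, so the induction hypothesis yields finite locally free $\Nilu$-resolutions, which are then spliced. Your plan to lift $\nu$ to $E_0$ and ``correct'' the lift to a nilpotent endomorphism by adjoining further terms is not a standard construction; a lift of a nilpotent map along a projective surjection need not be nilpotent, and you have not specified what the correction is, why it stays within locally free modules, or why it terminates.
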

\begin{proof}
(i) Since $X$ has an ample family of line bundles and the map $Y\to X$ is quasi-projective, also $Y$ has an ample family of line bundles \cite[2.1.2.(h)]{tt90}. Hence there exists an exact sequence $E_1 \to[\phi] E_0 \to F \to 0$ where $E_1,E_0$ are $\O_{Y}$-vector bundles. By our assumptions, 
	\[
	\im(\phi)|_U = \ker\bigl(E_0\to F\bigr)|_{U\times_XY} = \ker\bigl(E_0|_U\to 0\bigr) = E_0|_{U\times_XY}
	\]
is flat. By \emph{platification par \'eclatement} \cite[5.2.2]{raynaud-gruson} there exists a $U$-admissible blow-up $p\colon X'\to X$ such that the strict transform $q^\mathrm{st}\im(\phi)$ is flat, \ie{} locally free. Furthermore, $q^\mathrm{st}\im(\phi) = \im(q^*\phi)$, \cf{} Remark~\ref{strict-trafo--rem} below. Hence we obtain an exact sequence
	\[
	0 \To \im(q^*\phi) \To q^*E_0 \To q^*F \To 0,
	\]
hence $q^*F\in\Modfpleq(Y')$ by Lemma~\ref{leq1-exact--lemma} (ii).

(ii) By (i), there is a $U$-modification $p\colon X'\to X$ such that both $q^*F$ and $q^*G$ have Tor-dimension $\leq 1$. Since $q^*$ is a left-adjoint, $q^*(\coker(\phi)) = \coker(q^*\phi)$ so that we may assume that also $\coker(q^*\phi)$ has Tor-dimension $\leq 1$. Hence we can apply Lemma~\ref{leq1-exact--lemma} (ii) and are done.

(iii) We argue by induction on $k>0$ with $\nu^k=0$. The case $k=1$ is follows from (i) and we assume that $k\geq 2$.  By (ii) we find a $U$-modification $p\colon X'\to X$ such that $q^*F, \ker(q^*\nu^{k-1}), \im(q^*\nu^{k-1})$, and $\coker(q^*\nu^{k-1})$ all lie in $\Modfpleq(Y')$. By induction assumption, there exists after further $U$-modification a finite resolution
 	\[
	0\to (E'_k,\nu'_l) \to \ldots \to (E'_0,\nu'_0) \to (\ker(q^*\nu^{k-1}),\nu') \to 0
	\]
where all $E'_i$ are vector bundles and where $\nu'$ is the restriction of $q^*\nu$ to $\ker(q^*\nu^{k-1})$. Similarly, there exists a finite resolution of  $(\im(q^*\nu^{k-1}),\nu'')$ where $\nu''$ is the restriction of $q^*\nu$ to $\im(q^*\nu^{k-1})$. These two finite resolutions now can be patched together to a finite resolution of $q^*F$.
\end{proof}


\begin{lemma} \label{leq1_ZR--lem}
Assume that $X$ is divisorial and that $U$ is dense in $X$. Then the inclusion
	\[
	\Mod^{\mathrm{fp},\leq 1}_{\tilde Z}(\skp{X}_U) \To \Modfp_{\tilde Z}(\skp{X}_U)
	\]
is an equivalence of categories where $\tilde{Z} := \skp{X}_U\setminus U$.
\end{lemma}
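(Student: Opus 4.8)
The functor is the inclusion of a full subcategory, hence fully faithful, so the plan is to establish essential surjectivity: I must show that every $F\in\Modfp_{\tilde Z}(\skp{X}_U)$ has Tor-dimension $\leq 1$. The strategy is to descend $F$ to a $U$-modification, improve it there using \emph{platification par \'eclatement}, and then transport the resulting locally free resolution back up to $\skp{X}_U$. For the first step, by the equivalence $\colim_{X'\in\Mdf(X,U)}\Modfp(X')\to\Modfp(\skp{X}_U)$ combined with the cofinality of reduced modifications (Lemma~\ref{ZR-reduced-lem}), there are a reduced $U$-modification $X_0$ and a module $F_0\in\Modfp(X_0)$ with $p_0^*F_0\cong F$, where $p_0\colon\skp{X}_U\to X_0$ is the canonical projection. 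Since $p_0$ restricts to the identity over $U$, one has $F_0|_U\cong F|_U=0$, so $F_0$ lies in $\Modfp_{Z_0}(X_0)$ with $Z_0:=X_0\setminus U$.

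Next, $X_0$ is again divisorial, being projective (hence quasi-projective) over the divisorial scheme $X$ (\cf{} \cite[2.1.2.(h)]{tt90}), and $U$ is dense in $X_0$. Applying Lemma~\ref{platification-support-lemma}~(i) to the identity $\id_{X_0}$ --- an isomorphism, in particular smooth and quasi-projective --- and to the module $F_0$, I obtain a $U$-modification $p\colon X_1\to X_0$ such that $p^*F_0$ has Tor-dimension $\leq 1$; that is, there is a short exact sequence
\[
0\to E_1\to E_0\to p^*F_0\to 0
\]
of finitely presented $\O_{X_1}$-modules with $E_0,E_1$ locally free. The composite $X_1\to X_0\to X$ is again a $U$-modification, so there is a projection $p_1\colon\skp{X}_U\to X_1$ with $p\circ p_1=p_0$, whence $p_1^*(p^*F_0)\cong F$.

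Finally, I would pull the displayed sequence back along $p_1$ and check that it remains exact. Exactness can be verified on stalks; the local rings of $\skp{X}_U$ are filtered colimits of the local rings of the $U$-modifications of $X_1$ (which are cofinal among those of $X$); the pullback of a short exact sequence of $\O_{X_1}$-modules of Tor-dimension $\leq 1$ along any such $U$-modification is exact by Lemma~\ref{leq1-exact--lemma}~(i); and filtered colimits of abelian groups are exact. Therefore
\[
0\to p_1^*E_1\to p_1^*E_0\to F\to 0
\]
is exact with $p_1^*E_0$ and $p_1^*E_1$ locally free, which shows that $F$ has Tor-dimension $\leq 1$. Since $F$ has support in $\tilde Z$ by hypothesis, this gives $F\in\Mod^{\mathrm{fp},\leq 1}_{\tilde Z}(\skp{X}_U)$, as required.

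The entire mathematical weight of the argument sits in Lemma~\ref{platification-support-lemma}, \ie{} in Raynaud-Gruson's \emph{platification par \'eclatement}; everything else is formal bookkeeping with filtered colimits. The step I expect to require the most care is the last one, namely the stability of the locally free resolution under pullback to the limit $\skp{X}_U$ --- but this is exactly what Lemma~\ref{leq1-exact--lemma}~(i) provides, once combined with the exactness of filtered colimits.
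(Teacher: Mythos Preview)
Your argument is correct and follows exactly the route the paper sketches (descend, apply Lemma~\ref{platification-support-lemma}, pull the resolution back using Lemma~\ref{leq1-exact--lemma}); the paper's own proof is a one-line pointer to these two lemmas, and you have simply unpacked it. One small point: you assert ``$U$ is dense in $X_0$'' in order to apply Lemma~\ref{platification-support-lemma} with $X_0$ as the base, but an arbitrary reduced $U$-modification need not have this property (\eg{} $X_0$ could acquire an extra component over $X\setminus U$); the clean fix is to descend $F$ to a $U$-admissible blow-up instead --- these are cofinal (Proposition~2.6), reduced since $X$ is, and have $U$ dense because the exceptional locus is a Cartier divisor.
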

\begin{proof}
It suffices to show that the functor is essentially surjective. Given a module $F$ in the target, we find by Lemma~\ref{ZR--fpmodules-colimit--lemma} an $X'\in\Mdf(X,U)$ and module $F'\in\Modfp(X')$ which pulls back to $F$. By desgin, $F'|_U = F|_U =0$, so $F'$ has support away from $U$ and this remains true after pulling back along any $U$-modification. By Lemma~\ref{platification-support-lemma} there exists a $U$-modification $p\colon X''\to X'$ such that $p^*F'$ has Tor-dimension $\leq 1$, thus also $F$ has Tor-dimension $\leq 1$ by Lemma~\ref{leq1-exact--lemma}.
\end{proof}

\begin{remark} \label{strict-trafo--rem}
In the situation of the proofs of Lemma~\ref{platification-support-lemma} and Lemma~\ref{leq1_ZR--lem} we used that $\im(q^*\phi)$ is the strict transform of $\im(\phi)$ along the $U$-admissible blow-up $q\colon X''\to X'$. By definition, $q^\mathrm{st}\im(\phi)$ is the quotient of $q^*\im(\phi)$ by its submodule of sections whose support is contained in the centre of the blow-up $q$ \stacks{080D}. 
Since the surjective map $q^*\im(\phi)\to q^\mathrm{st}\im(\phi)$ factors over the map $\im(q^*\phi) \subset p^*E_0'$, the following commutative diagram has exact rows and exact columns.
\[
\begin{xy} 
\xymatrix{
	& 0 \ar[d] & 0 \ar[d]  \\
	0 \ar[r] & \ker(\sigma) \ar[r] \ar[d] 
	& \im(q^*\phi) \ar[r]^\sigma \ar[d] 
	& q^\mathrm{st}\im(\phi) \ar[d] \ar[r] & 0
	\\
	0 \ar[r] & \ker(\tau) \ar[r] 
	& q^*E_0' \ar[r]^\tau & q^\mathrm{st}E_0' \ar[r] & 0
}
\end{xy}
\]
Since $E_0'$ is a vector bundle, $q^*E_0' = q^\mathrm{st}E_0$ \stacks{080F} which implies the claim.
\end{remark}

If a locally ringed space is cohesive, \ie{} its structure sheaf is coherent (Definition~\ref{cohesive--def}), then a module is coherent if and only if it is finitely presented (Lemma~\ref{cohesive_coherent_fin-pres--lem}). Unfortunately, we do not know whether or not this is also true for the Zariski-Riemann space $\skp{X}_U$. But passing to the complement $\skp{X}_U\setminus U$ we have the following.

\begin{prop} \label{ZR-support-fp=coh--prop}
Assume that $X$ is divisorial and that $U$ is dense in $X$. Let $\tilde{Z}$ be the complement of $U$ in $\skp{X}_U$. An $\O_{\skp{X}_U}$-module with support on $\tilde{Z}$ is coherent if and only if it is finitely presented.
\end{prop}
\begin{proof}
We have to show that every finitely presented $\OXU$-module with support in $\tilde{Z}$ is coherent.  
Let $F$ be a finitely presented $\OXU$-module with support on $\tilde{Z}$. By definition, $F$ is of finite type. Let $V$ be an open subset of $\skp{X}_U$ and let $\phi \colon \O_V^n \to F|_V$ be a morphism. We need to show that $\ker(\phi)$ is of finite type. Since this is a local property and $\skp{X}_U$ is coherent, we may assume that $V$ is quasi-compact. By passing iteratively to another $U$-modification, there exists an $X'\in\Mdf(X,U)$ with canonical projection $p_{X'}\colon \skp{X}_U \to X'$ such that  
\begin{itemize}
\item $F = (p_{X'})^*F_X$ for some $F_X\in\Modfp_{Z'}(X')$ (Proposition~\ref{limit_spaces--prop} (iv)),
	\item $F_X$ has Tor-dimension $\leq 1$ (Lemma~\ref{leq1_ZR--lem}),
	\item $V=(p_{X'})\inv(V')$ for some open subset $V'$ of $X'$ (Proposition~\ref{limit-top-spaces--prop}~(i)),
	\item $\phi$ is induced by a morphism $\phi' \colon \O_{V'}^n \to F_X|_{V'}$ (Proposition~\ref{limit_spaces--prop}~(v)), and
	\item $\coker(\phi)$ has Tor-dimension $\leq 1$ (Lemma~\ref{leq1_ZR--lem}).
\end{itemize}	
Since $\ker(\phi')$ is of finite type we may assume that there exists a surjection $\O_{V'}^m \surj \ker(\phi')$ for some $m\in\N$ (otherwise we have to shrink $V'$). By Lemma~\ref{leq1-exact--lemma} $(p_X)^*\bigl( \ker(\phi') \bigr) = \ker(\phi)$, hence it is of finite type.
\end{proof}

\begin{thm} \label{leq1-resumee--thm}
Assume that $X$ is divisorial and that $U$ is dense in $X$. Let $\tilde{Z}$ be the complement of $U$ in $\skp{X}_U$. Then the canonical functors
	\carre{\Coh_{\tilde Z}^{\leq 1}(\skp{X}_U)}{\Coh_{\tilde Z}(\skp{X}_U)}{\Mod^{\mathrm{fp}, \leq 1}_{\tilde{Z}}(\skp{X}_U)}{\Modfp_{\tilde{Z}}(\skp{X}_U)}
are equivalences. In particular, the canonical functor 
		\[
		\colim_{X'\in\Mdf(X,U)} \,\Coh_{Z'}(X') \To \Coh_{\tilde Z}(\skp{X}_U)
		\]
is an equivalence of categories  where $Z'=X'\setminus U$ and the colimit is taken in the 2-category of categories.
\end{thm}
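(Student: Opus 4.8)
The plan is to establish first the square of four functors and then deduce the colimit description from it.

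\emph{The square.} All four arrows are inclusions of full subcategories of $\Mod(\skp{X}_U)$, so the square commutes automatically and it suffices to check that three of them are equivalences. The right-hand vertical arrow $\Coh_{\tilde Z}(\skp{X}_U)\to\Modfp_{\tilde Z}(\skp{X}_U)$ is an equivalence by Proposition~\ref{ZR-support-fp=coh--lem}: a module supported on $\tilde Z$ is coherent iff it is finitely presented, so these two full subcategories coincide. Restricting this identification to the objects of Tor-dimension $\leq 1$ — an intrinsic property of $\O_{\skp{X}_U}$-modules (Definition~\ref{defin-tor-dimension}) — shows the left-hand vertical arrow is an equivalence too. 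The bottom horizontal arrow $\Modfpleq_{\tilde Z}(\skp{X}_U)\to\Modfp_{\tilde Z}(\skp{X}_U)$ is an equivalence by Lemma~\ref{leq1_ZR--lem}. Commutativity then forces the top horizontal arrow to be an equivalence as well; under the two vertical identifications it is just the bottom arrow.

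\emph{The colimit.} I would first prove the variant with $\Modfp$ in place of $\Coh$, i.e.\ that $\colim_{X'}\Modfp_{Z'}(X')\to\Modfp_{\tilde Z}(\skp{X}_U)$ is an equivalence, and then transport along the identifications above, invoking Proposition~\ref{ZR-support-fp=coh--lem} once more to replace the target by $\Coh_{\tilde Z}(\skp{X}_U)$. Full faithfulness of the $\Modfp$-variant is immediate from the equivalence $\colim_{X'}\Modfp(X')\simeq\Modfp(\skp{X}_U)$ proved at the start of this section, since $\Modfp_{Z'}(X')\subseteq\Modfp(X')$ and $\Modfp_{\tilde Z}(\skp{X}_U)\subseteq\Modfp(\skp{X}_U)$ are full subcategories and all the relevant morphism sets are the same filtered colimit (Proposition~\ref{limit_spaces--prop}). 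For essential surjectivity, given $F\in\Modfp_{\tilde Z}(\skp{X}_U)$ one writes $F\cong p_{X_0}^*F_0$ with $F_0\in\Modfp(X_0)$; since every $U$-modification, and hence the projection $\skp{X}_U\to X_0$, is an isomorphism over $U$, the restriction $F_0|_U$ agrees with $F|_U=0$, so in fact $F_0\in\Modfp_{Z_0}(X_0)$ and $F$ is in the essential image.

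\emph{The main obstacle.} What then remains — and is, I expect, the hard part — is to identify $\colim_{X'}\Coh_{Z'}(X')$ with $\colim_{X'}\Modfp_{Z'}(X')$; equivalently, to show that a finitely presented $\O_{X'}$-module supported on $Z'=X'\setminus U$ becomes coherent after a suitable further $U$-modification (this is already needed for the transition functors between the $\Coh_{Z'}(X')$ to make sense, and with it in hand the comparison functor is fully faithful by the full-subcategory argument and essentially surjective by construction). By Lemma~\ref{platification-support-lemma}(i), applied with $X'$ in place of $X$ since a $U$-modification of $X'$ is cofinally also one of $X$, and by cofinality of reduced $U$-modifications (Lemma~\ref{ZR-reduced-lem}), one may assume such a module $F$ has Tor-dimension $\leq 1$ and that $X'$ is reduced; then locally $F$ is the cokernel of a map $\O_{X'}^a\to\O_{X'}^b$ of vector bundles which, being an isomorphism over the dense open $U$, satisfies $a=b$ and is injective (compare the proof of Lemma~\ref{leq1-exact--lemma}(i)), with determinant a non-zero-divisor $d$. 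Hence $d$ kills $F$, so $F$ is finitely presented over $\O_{X'}/(d)$, a sheaf of rings supported on $V(d)\subseteq Z'$, and coherence of $F$ over $\O_{X'}$ reduces to coherence over $\O_{X'}/(d)$ — the step where the real content sits. Granting this, chaining the three identifications yields that the canonical functor $\colim_{X'}\Coh_{Z'}(X')\to\Coh_{\tilde Z}(\skp{X}_U)$ is an equivalence.
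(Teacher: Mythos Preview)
Your treatment of the square is exactly the paper's: invoke Proposition~\ref{ZR-support-fp=coh--lem} for the right vertical arrow and Lemma~\ref{leq1_ZR--lem} for the bottom, then deduce the remaining two (the paper phrases the deduction as ``the square is a pullback square'', which amounts to your restriction-to-Tor-dimension-$\leq 1$ observation).

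For the colimit assertion the paper does strictly less than you attempt. Its entire argument is: the equivalence $\colim_{X'}\Modfp(X')\simeq\Modfp(\skp{X}_U)$ of Proposition~\ref{limit_spaces--prop} restricts to an equivalence
\[
\colim_{X'\in\Mdf(X,U)}\Modfp_{Z'}(X')\ \simeq\ \Modfp_{\tilde Z}(\skp{X}_U),
\]
``which yields the desired statement''. That is, the paper proves the $\Modfp$ version --- exactly your second paragraph --- and then identifies the \emph{target} with $\Coh_{\tilde Z}(\skp{X}_U)$ via the square. It does not address the passage from $\Modfp_{Z'}(X')$ to $\Coh_{Z'}(X')$ on the \emph{source} side.

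So the ``main obstacle'' you isolate is not something the paper resolves; it is either an imprecision in the theorem statement (the source of the colimit should arguably read $\Modfp_{Z'}(X')$, which is all the proof establishes) or a tacit identification at finite level. In the one place the theorem is invoked later (proof of Theorem~\ref{ZR_K-theory_support_g-theory--thm}) only the square of equivalences is used, not the colimit description. Your determinant reduction is sound as far as it goes --- $\det(\phi)$ annihilates $F$ by the adjugate identity, and coherence over $\O_{X'}/(d)$ would imply coherence over $\O_{X'}$ since $d$ is a non-zero-divisor --- but the terminal step is no easier than the original problem: $\O_{X'}/(d)$ has no reason to be coherent. You should stop where the paper does, at the $\Modfp$ version, and flag the discrepancy rather than trying to close it.
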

\begin{proof}
The lower horizontal functor is an equivalence by Lemma~\ref{leq1_ZR--lem} and the right vertical functor is an equivalence by Lemma~\ref{ZR-support-fp=coh--prop}. The other two functors are equivalence since the square is a pullback square. The equivalence from Proposition~\ref{limit_spaces--prop} restricts to an equivalence
	\[
		\colim_{X'\in\Mdf(X,U)} \,\Modfp_{Z'}(X') \To \Modfp_{\tilde Z}(\skp{X}_U)
	\]
which yields the desired statement.
\end{proof}

\begin{cor} \label{ZR-special-fibre-cohesive--cor}
Assume that $X$ is divisorial and that $U$ is dense in $X$. Let $\tilde{Z}$ be the complement of $U$ in $\skp{X}_U$ with induced structure of a locally ringed space (\cf{} Remark~\ref{structure-on-tilde-Z--remark}). Then $\O_{\tilde{Z}}$ is a coherent $\O_{\tilde{Z}}$-module, \ie{} the locally ringed space $\tilde{Z}$ is cohesive.
\end{cor}
\begin{proof}
By Lemma~\ref{coherence-after-pushforward--lem} below it is sufficient to show that $i_*\O_{\tilde{Z}}$ is a coherent $\O_{\skp{X}_U}$-module where $i\colon\tilde{Z} \inj \skp{X}_U$ denotes the inclusion map. This follows from Proposition~\ref{ZR-support-fp=coh--prop} since $i_*\O_{\tilde{Z}}$ is finitely presented by Lemma~\ref{fin-pres-back-and-fro--lem} below.
\end{proof}

\begin{lemma} \label{coherence-after-pushforward--lem}
Let $i\colon Z\inj Y$ be a closed immersion of locally ringed spaces and let $F\in\Mod(Z)$ of finte type. If $i_*F\in\Coh(Y)$, then $F\in\Coh(Z)$.
\end{lemma}
\begin{proof}
Let $\phi\colon\O_W^n\to F|_W$ be a morphism for an open subset $W\subset Z$. Choose an open subset $V \subset Y$ such that $W=V\cap Z$. Then the kernel of the morphism $\psi \colon \O_V^n \to i_*\O_W^n \to[i_*\phi]i_*(F|_W)$ is of finite type if $i_*F$ is coherent so that we get an epimorphism $\O_V^m\to\ker(\psi)$. Then the induced morphism $\O_W^m \to[i_*i^*\psi] i^*\ker(\psi) \to \ker(\phi)$ witnesses $\ker(\phi)$ to be of finite type.
\end{proof}

\begin{lemma} \label{fin-pres-back-and-fro--lem}
Assume that $X$ is noetherian and that $U$ is dense in $X$. Let $i \colon \tilde{Z} \inj \skp{X}_U$ be the inclusion of the complement with induced structure of a locally ringed space (\cf{} Remark~\ref{structure-on-tilde-Z--remark}). Then the functor $i_* \colon \Mod(\tilde{Z}) \to \Mod(\skp{X}_U)$ restricts to a functor $i_* \colon \Modfp(\tilde{Z}) \to \Modfp(\skp{X}_U)$.
\end{lemma}
\begin{proof}
Let $F=p^*F'$ with $F'\in\Modfp(X')$ for some $X'\in\Mdf(X,U)$ with projection $p\colon\skp{X}_U\to X'$. Then $i^*F\cong q^*k^*F'$ for the induced morphisms $q\colon \tilde{Z}\to Z':=Z\times_XX'$ and $k\colon Z'\inj X'$. Note that the functor $k_*$ preserves finitely presented objects since $X'$ is noetherian. We claim that $i_*i^*F \cong p^*k_*k^*F'$ which implies the assertion of the lemma. The unit of the adjunction $q^*\dashv q_*$ induces a morphism $k_*k^*F' \to k_*q_*q^*k^*F' = p_*i_*q^*k^*F'$ which induces a morphism $\phi \colon p^*k_*k^*F' \to i_*q^*k^*F'=i_*i^*F$ by the adjunction $p^*\dashv p_*$. Applying the functor $i^*$ to $\phi$ yields the identity map on $q^*k^*F'$, hence $\phi$ is an isomorphsim since both its source and its target are supported on $\tilde{Z}$.
\end{proof}

\begin{reminder}
Recall that an \df{exact category} is an additive category $\Acal$ together with a class of \df{conflations} 
	\[
	A \infl B \surj C
	\]
satisfying certain axioms, \cf{} \cite[1.1]{schlichting04}. The morphism $A\infl B$ appearing in conflations are called \df{inflations} and the morphisms $B\surj C$ are called \df{deflations}. Every abelian category is an exact category whose conflations are the the short exact sequences; hence the inflations are the monomorphisms and the deflations are the epimorphisms. 
\end{reminder}

\begin{defin} \label{left-s-filtering--defin}
Let $\Acal$ be an exact subcategory of an exact category $\Bcal$. We say that $\Acal\subset\Bcal$ is ...
\begin{enumerate}
\item a \df{Serre subcategory} iff for every conflation $X' \rightarrowtail X \surj X''$ in $\Bcal$ we have that $X\in\Acal$ if and only if both $X'$ and $X''$ lie in $\Acal$.
\item \df{right-filtering} iff it is a Serre subcategory and if for every morphism $f\colon B \to A$ with $B\in\Bcal$ and $A\in\Acal$ there exists an object $A'\in\Acal$ such that $f$ can be factored as a composition $B \surj A' \to A$ where the morphism $B \surj A'$ is a deflation.
\item \df{right-s-filtering} iff it is right-filtering and if for every inflation $A\infl B$ in $\Bcal$ with $A\in\Acal$ there exists a deflation $B \surj A'$ with $A'\in\Acal$ such that the composition $A \infl B \surj A'$ is an inflation.
\end{enumerate}
\end{defin}

\begin{lemma} \label{modules-support-left-s-filtering-lemma}
Let $Y$ be a locally ringed space, $j\colon V\inj Y$ be the inclusion of an open subspace, and $i\colon Z\inj Y$ be the inclusion of its closed complement. For a full exact subcategory $\Mcal$ of $\Mod(Y)$ denote by $\Mcal_Z$ the full subcategory of $\Mcal$ which is spanned by modules with support on $Z$. Assume that $Z$ has the structure of a locally ringed space such that for every $M\in\Mcal$ the object $i_*i^*M$ lies in $\Mcal$ (and hence in $\Mcal_Z$). Then $\Mcal_Z$ is a right-s-filtering subcategory of $\Mcal$.
\end{lemma}
\begin{proof}
The inclusion $\Mcal_Z \inj \Mcal$ is the kernel of the exact restriction functor $j^*\colon \Mcal \to \Mod(U)$, hence it is closed under subobjects, quotients, and extension, thus a Serre subcategory.
Given a morphism $f\colon B\to A$ in $\Mcal$ with $A\in\Mcal_Z$, we have a factorisation of $f$ as 
	\[
	B \surj[\epsilon_B] i_*i^*B \to[i_*i^*f] i_*i^*A \to[(\epsilon_A)\inv] A.
	\]
and the composition $(\epsilon_A)\inv\circ(i_*i^*f)$ is a deflation. Thus $\Mcal_Z$ is right-filtering.
Now let $A\infl B$ be an inflation (\ie{} a monomorphism whose cokernel lies in $\Mcal$) with $A\in\Mcal_Z$. Then the unit $B\surj i_*i^*B$ is a deflation and the composition $A \infl B \to[\eta_B] i_*i^*B$ is an inflation as it equals the composition $A \to[\eta_A] i_*i^*A \infl i_*i^*B$.
\end{proof}

\begin{prop} \label{ZR_localisation--prop}
Assume that $X$ is noetherian. Denote by $\tilde{Z}$ the closed complement of $U$ in $\skp{X}_U$. Then $\Modfp_{\tilde Z}(\skp{X}_U)$ is a right-s-filtering subcategory of the exact category $\Modfp(\skp{X}_U)$ and the inclusion $j\colon U \inj \skp{X}_U$ induces an equivalence of categories
	\[
	j^* \colon\, \Modfp(\skp{X}_U)/\Modfp_{\tilde Z}(\skp{X}_U) \To[\cong] \Modfp(U).
	\]
\end{prop}
\begin{proof} 
Let $i\colon \tilde{Z}\inj\skp{X}_U$ be the inclusion map. For $F\in\Modfp(\skp{X}_U)$ we have that $i_*i^*F\in\Modfp(\skp{X}_U)$ by Lemma~\ref{fin-pres-back-and-fro--lem}. Applying Lemma~\ref{modules-support-left-s-filtering-lemma} with $Y:=\skp{X}_U$, $V:=U$, $Z:=\tilde{Z}$, and $\Mcal := \Modfp(\skp{X}_U)$ we obtain that $\Modfp_{\tilde{Z}}(\skp{X}_U)$ is a right-s-filtering subcategory of $\Modfp(\skp{X}_U)$. Hence the quotient category 
	\[
	\Modfp(\skp{X}_U)/\Modfp_{\tilde Z}(\skp{X}_U)
	\]
is defined \cite[1.12, 1.14]{schlichting04}. By definition, the restriction $j^* \colon \Modfp(\skp{X}_U)\to \Modfp(U)$ factors through the canonical functor $\Modfp(\skp{X}_U) \to\Modfp(\skp{X}_U) / \Modfp_{\tilde Z}(\skp{X}_U)$. We will show that the induced functor
	\[
	j^* \Colon \Modfp(\skp{X}_U)/\Modfp_{\tilde{Z}}(\skp{X}_U) \To \Modfp(U)
	\]
is fully faithful and essentially surjective, following the lines of the classical proof for noetherian schemes, \cf{} \cite[2.3.8]{schlichting11}.

For essential surjectivity let $F\in\Modfp(U)=\Coh(U)$. The inclusion $j_X \colon U \inj X$ induces an equivalence of categories
	\[
	(j_X)^* \Colon \Coh(X)/\Coh_Z(X) \To \Coh(U) 
	\]
where $Z:=X\setminus U$ \cite[2.3.8]{schlichting11}. Thus there exists an $F_X\in\Coh(X) = \Modfp(X)$ such that $(j_X)^*F_X\cong F$.
Since $j_X$ factors as $p_X\circ j$, it follows that
	\[
	F \cong j_X^*F_X \cong (p_X\circ j)^*F_X \cong j^*(p_X^*F_X),
	\]
\ie{} $F$ comes from a finitely presented module $p_X^*F_X \in \Mod\fp(\skp{X}_U)$.

For fullness let $\phi \colon j^*F\to j^*G$ be a morphism of $\O_U$-modules with $F$ and $G$ in $\Mod\fp(\skp{X}_U)$. The quotient of $\Mod\fp(\skp{X}_U)$ by $\Modfp_{\tilde Z}(\skp{X}_U)$ is the localisation of $\Mod\fp(\skp{X}_U)$ along those morphisms which are sent to isomorphisms by $j^*$. Consider the pullback diagram
	\[\begin{xy}  \tag{$\Delta$}  \xymatrix{ 
	H \ar[rr] \ar[d]_\alpha && G \ar[d]^\beta \\
	F \ar[r] & j_*j^*F \ar[r]^{j_*(\phi)} & j_*j^*G
	}\end{xy}\]
in $\Mod(\skp{X}_U)$. Since $j^*$ is exact, the square $j^*(\Delta)$ is also a pullback; hence $j^*(\alpha)$ is an isomorphism (as $j^*(\beta)$ is one). Thus the span $F\ot H\to G$ represents a morphism $\psi$ in the quotient category such that $\phi = j^*(\phi)$, hence $j^*$ is full.

For faithfullness let $F'\ot[\eta]F\to[\phi] G$ be a span representing a morphism in the quotient category $\Modfp(\skp{X}_U)/\Modfp_{\tilde{Z}}(\skp{X}_U)$ which becomes zero when restricted to $U$. Let $\phi'\colon F'\to G'$ be a morphism in $\Modfp(X')$ for some $X'\in\Mdf(X,U)$ with $\phi=p^*(\phi')$ for the projection $p\colon\skp{X}_U\to X'$. Then $\phi'|_U=0$ by design so that $\phi'=0$ in the quotient category $\Modfp(X')/\Modfp_{Z'}(X')$, where $Z'=X'\setminus U$, by the classical case for noetherian schemes. Hence $\phi$ and also the morphism represented by $F'\ot[\eta]F\to[\phi] G$ are zero in $\Modfp(\skp{X}_U)/\Modfp_{\tilde{Z}}(\skp{X}_U)$.

\end{proof}

	\section{K-theory of admissible Zariski-Riemann spaces}
	\label{sec-k-theory-zr}

In this section we prove for the K-theory of the Zariski-Riemann space a comparison with G-theory (Theorem~\ref{ZR_K-theory_support_g-theory--thm}, Corollary~\ref{ZR-K=G-U-regular--cor}) and homotopy invariance (Theorem~\ref{K-ZR-support-homotopy-invariance--thm}, Corollary~\ref{K-ZR-U-regular-homotopy-invariance--cor}). Finally, we provide an example of a non-noetherian abelian category whose negative K-theory vanishes (Example~\ref{ZR-not-coherent--ex}).

\begin{notation*}
In this section let $X$ be a \emph{reduced} quasi-compact and quasi-separated scheme and let $U$ be a quasi-compact open subscheme of $X$. Denote by $\tilde{Z}$ the complement of $U$ within the $U$-admissible Zariski-Riemann space $\skp{X}_U$ (Definition~\ref{U-admissible_ZR-space--defin}). We equip the closed complement $X\setminus U$ with the reduced scheme structure so that $\tilde{Z}$ has the structure of a locally ringed space (Remark~\ref{structure-on-tilde-Z--remark}).
\end{notation*}

\begin{defin} \label{K-theory-ZR--def}
For an exact category $\Ecal$ we denote by $\K(\Ecal)$ its \emph{non-connective} K-theory spectrum as defined by Schlichting \cite{schlichting04}.\footnote{Beware that in \emph{loc.~cit.}\ this object is denoted by ``$\mathbb{K}$''.} If $X$ is a divisorial scheme, then $\K(\Vect(X))$ is equivalent to the non-connective algebraic K-theory spectrum of $X$ \`{a} la Thomason-Trobaugh \cite{tt90}. 

We define the \df{K-theory of the Zariski-Riemann space} as
	\[
	\K(\skp{X}_U) := \K\bigl( \Vect(\skp{X}_U) \bigr) 
	\]
and the \df{K-theory with support} as
	\[
	\K(\skp{X}_U\on\tilde{Z}) := \fib\bigl( \K(\skp{X}_U) \To \K(U) \bigr).
	\]
\end{defin}

\begin{lemma} \label{K-ZR-colimit--lemma}
The canonical maps 
	\begin{align*}
	\colim_{X'\in\Mdf(X,U)}\K(X') &\To \K(\skp{X}_U) \\
	\colim_{X'\in\Mdf(X,U)}\K(X'\on Z') &\To \K(\skp{X}_U\on\tilde{Z})
	\end{align*}
are equivalences where $Z' := Z\times_XX'$.	 
\end{lemma}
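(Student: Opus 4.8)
The plan is to reduce the statement about $\K$ to the equivalences of module categories already established, using that algebraic K-theory commutes with filtered colimits of exact categories (and with filtered colimits of rings/schemes in the relevant sense). Concretely, for the first map one applies $\K(-)$ to the equivalence of exact categories
\[
\colim_{X'\in\Mdf(X,U)} \Vect(X') \To[\simeq] \Vect(\skp{X}_U)
\]
from Lemma~\ref{ZR-vect-colimit--lemma}. Since $\K$-theory of exact categories preserves filtered colimits (the colimit being taken in the $2$-category of categories, which for a filtered diagram of exact categories with exact transition functors agrees with the colimit of exact categories), one gets
\[
\colim_{X'\in\Mdf(X,U)} \K(\Vect(X')) \To[\simeq] \K(\Vect(\skp{X}_U)) = \K(\skp{X}_U),
\]
and $\K(\Vect(X')) = \K(X')$ by definition. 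This settles the first equivalence.

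For the second map, I would combine this with the analogous statement for $U$. The projections $p_{X'}\colon \skp{X}_U \to X'$ restrict to the identity on $U$ (since each $X'$ is a $U$-modification), so the maps $\K(X') \to \K(U)$ are compatible with the transition maps and with $\K(\skp{X}_U)\to \K(U)$; moreover $\colim_{X'} \K(U) \simeq \K(U)$ as the diagram is constant (and $\Mdf(X,U)$ is filtered, in particular connected). Taking the fiber is a finite limit, which commutes with filtered colimits of spectra, so
\[
\colim_{X'} \fib\bigl(\K(X') \to \K(U)\bigr) \To[\simeq] \fib\bigl(\colim_{X'}\K(X') \to \colim_{X'}\K(U)\bigr) \simeq \fib\bigl(\K(\skp{X}_U) \to \K(U)\bigr),
\]
which is exactly the claimed equivalence $\colim_{X'} \K(X'\on Z') \To[\simeq] \K(\skp{X}_U\on\tilde{Z})$, once one notes $\K(X'\on Z') = \fib(\K(X')\to\K(X'\setminus Z')) = \fib(\K(X')\to\K(U))$ since $X'\setminus Z' \cong U$.

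The one point that needs care — and which I expect to be the main obstacle — is the justification that $\K$ of the filtered colimit (formed as a $2$-colimit of ordinary categories) agrees with the filtered colimit of $\K$-spectra. The equivalence of Lemma~\ref{ZR-vect-colimit--lemma} is stated at the level of categories, not exact categories, so I would need to observe that the pullback functors $p^*\colon \Vect(X')\to\Vect(X'')$ are exact (the pullback of a short exact sequence of vector bundles stays short exact, vector bundles being flat), hence the colimit inherits a canonical exact structure making it equivalent as an exact category to $\Vect(\skp{X}_U)$, and then invoke that nonconnective (or connective) $\K$-theory commutes with filtered colimits of exact categories — a standard fact, e.g. via the $S_\bullet$-construction commuting with filtered colimits, or via the corresponding property of $\K$-theory of small stable/idempotent-complete categories after passing to bounded derived categories. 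Everything else is formal: compatibility of the restriction maps to $U$, filteredness (hence connectedness) of $\Mdf(X,U)$ so that $\colim \K(U)\simeq\K(U)$, and commutation of finite limits with filtered colimits in spectra.
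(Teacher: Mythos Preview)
Your proposal is correct and follows exactly the paper's approach: the first equivalence comes from Lemma~\ref{ZR-vect-colimit--lemma} together with the fact that K-theory commutes with filtered colimits of exact categories (along exact functors), and the second follows because filtered colimits of spectra commute with finite limits. Your additional remarks on the exactness of the transition functors and the constancy of the $\K(U)$-diagram are reasonable elaborations of what the paper leaves implicit.
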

\begin{proof}
The first equivalence follows from Lemma~\ref{ZR-vect-colimit--lemma} and the fact that K-theory commutes with filtered colimits of exact functors; in non-negative degrees this is due to Quillen \cite[p.~20]{quillen73} and in negative degrees due to Schlichting \cite[\S 7, Cor.~5]{schlichting06}. The second one follows since filtered colimits commute with finite limits.
\end{proof}

\begin{thm}[Kerz] \label{ZR_negative_K_support_vanishing--thm}
If $X$ is reduced and divisorial, then 
	\[
	\K_i(\skp{X}_U\on\tilde{Z})=0 \quad\text{ for } i<0.
	\]
Furthermore, if $U$ is additionally regular, then we have $\K_i\bigl(\skp{X}_U\bigr)\cong 0$ for every $i<0$.
\end{thm}
\begin{proof}
The first statement follows from Lemma~\ref{K-ZR-colimit--lemma} and a result of Kerz saying that every negative relative K-theory class vanishes after some admissible blow-up \cite[Prop.~7]{kerz-icm}. The second statement is an immediate consequence using that negative K-theory of regular schemes vanishes.
\end{proof}

\subsection{Comparison with G-theory}

For divisorial noetherian schemes, G-theory is the K-theory of the abelian category of coherent modules (which is the same as the category of finitely presented modules). Over an arbitrary scheme (or locally ringed space) a finitely generated module may not be coherent. Since we want the category of vector bundles to be included, we work with finitely presented modules for defining G-theory for admissible Zariski-Riemann spaces.

\begin{defin} \label{G-theory-ZR--def}
We define the \df{G-theory of the Zariski-Riemann space} as
	\[
	\Ge(\skp{X}_U) := \K\bigl( \Modfp(\skp{X}_U) \bigr).
	\]
and the \df{G-theory with support} as
	\[
	\Ge(\skp{X}_U\on\tilde{Z}) := \fib\bigl( \Ge(\skp{X}_U) \To \Ge(U) \bigr).
	\]
\end{defin}

\begin{prop} \label{ZR-G-theory-fibre-sequence--prop}
Assume that $X$ is divisorial and noetherian and that $U$ is dense in $X$. Then there is a fibre sequence
	\[
	\K\bigl( \Coh_{\tilde{Z}}(\skp{X}_U) \bigr) \To \Ge(\skp{X}_U) \To \Ge(U).
	\]
In other words, the canonical map
	\[
	\K\bigl( \Coh_{\tilde{Z}}(\skp{X}_U) \bigr) \To \Ge(\skp{X}_U\on\tilde{Z})
	\]
is an equivalence.
\end{prop}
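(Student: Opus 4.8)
The plan is to establish the fibre sequence first, then the two comparison maps; the first two steps are formal consequences of the previous section and the third is the substantial one.

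\emph{The fibre sequence.} By Proposition~\ref{ZR_localisation--prop}, $\Modfp_{\tilde{Z}}(\skp{X}_U)$ is a right-s-filtering subcategory of the exact category $\Modfp(\skp{X}_U)$ and the restriction $j^{*}$ identifies the quotient with $\Modfp(U)$, which equals $\Coh(U)$ since $U$ is noetherian. Applying the localization theorem for the K-theory of exact categories \cite{schlichting04,schlichting11} to this exact sequence of exact categories gives a fibre sequence $\K(\Modfp_{\tilde{Z}}(\skp{X}_U))\to\K(\Modfp(\skp{X}_U))\to\K(\Modfp(U))$. By Definition~\ref{G-theory-ZR--def} the middle and right terms are $\Ge(\skp{X}_U)$ and $\Ge(U)$, and by Proposition~\ref{ZR-support-fp=coh--lem} the subcategory $\Modfp_{\tilde{Z}}(\skp{X}_U)$ equals $\Coh_{\tilde{Z}}(\skp{X}_U)$; so this is the asserted fibre sequence.

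\emph{The second map.} The inclusion $\Coh_{\tilde{Z}}(\skp{X}_U)\hookrightarrow\Modfp(\skp{X}_U)$ followed by $j^{*}$ is the zero functor, since a module supported in $\tilde{Z}$ restricts to $0$ on $U$. Hence the induced map $\K(\Coh_{\tilde{Z}}(\skp{X}_U))\to\Ge(\skp{X}_U)$ factors canonically through $\fib(\Ge(\skp{X}_U)\to\Ge(U))=\Ge(\skp{X}_U\on\tilde{Z})$, and this factorization is exactly the comparison with the fibre of the sequence produced above, hence an equivalence.

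\emph{The first map.} For $\K(\Coh(\tilde{Z}))\to\K(\Coh_{\tilde{Z}}(\skp{X}_U))$ I would use the dévissage theorem for the pushforward $i_{*}\colon\Coh(\tilde{Z})\to\Coh_{\tilde{Z}}(\skp{X}_U)$ along the closed immersion $i\colon\tilde{Z}\hookrightarrow\skp{X}_U$. Both categories are abelian (the target by Proposition~\ref{ZR_localisation--prop}, the source because coherent modules on a ringed space always form an abelian category), $i_{*}$ is exact and fully faithful, and its essential image — the objects killed by the ideal $\mathcal{I}$ of $\tilde{Z}$ — is closed under subobjects and quotients. What remains is to filter: for $F\in\Coh_{\tilde{Z}}(\skp{X}_U)$, write $F=p_{X'}^{*}F_{X'}$ for a coherent sheaf $F_{X'}$ on a $U$-modification $X'$ supported in $X'\setminus U$ (Theorem~\ref{leq1-resumee--thm}); since $X'$ is quasi-compact, $F_{X'}$ is killed by a power of the ideal of $X'\times_{X}Z$, so $F$ is killed by $\mathcal{I}^{n}$ for some $n$ and the filtration $F\supseteq\mathcal{I}F\supseteq\dots\supseteq\mathcal{I}^{n}F=0$ is finite. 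To see the subquotients lie in the image of $i_{*}$ I would use that $\mathcal{I}$ is of finite type — indeed it becomes invertible after passing to a suitable $U$-admissible blow-up along a finite-type structure on $X\setminus U$, and invertibility persists under further pullback — whence $\O_{\tilde{Z}}=\O_{\skp{X}_U}/\mathcal{I}$ is finitely presented; combining this with Proposition~\ref{ZR-support-fp=coh--lem} one checks that $i^{*}$ takes coherent sheaves with support in $\tilde{Z}$ to coherent sheaves on $\tilde{Z}$, so that each $\mathcal{I}^{k}F/\mathcal{I}^{k+1}F\cong\mathcal{I}^{\otimes k}\otimes_{\O_{\skp{X}_U}}(F/\mathcal{I}F)$ (using flatness of the invertible $\mathcal{I}$) is a line-bundle twist of a coherent $\O_{\tilde{Z}}$-module. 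Dévissage then yields the equivalence.

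\emph{Main obstacle.} The first two parts are formal; the work is the third. The delicate points are pinning down the ideal sheaf $\mathcal{I}$ of $\tilde{Z}$ and showing it becomes finite type (indeed invertible) after a suitable modification, while reconciling the finite-type scheme structure on $X\setminus U$ used there with the reduced one fixed in the running notation, and then verifying that the graded pieces of the $\mathcal{I}$-adic filtration are genuinely \emph{coherent} on $\tilde{Z}$ — not merely finitely presented — which is precisely where Proposition~\ref{ZR-support-fp=coh--lem}, hence the hypotheses that $X$ is divisorial and $U$ noetherian and dense, are used. One should also make sure the dévissage equivalence is available for the (possibly nonconnective) K-theory in play.
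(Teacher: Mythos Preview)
Your first two steps coincide with the paper's: Schlichting's localisation theorem applied to the exact sequence of Proposition~\ref{ZR_localisation--prop} yields both the fibre sequence and the identification $\K(\Coh_{\tilde Z}(\skp{X}_U))\simeq\Ge(\skp{X}_U\on\tilde Z)$, exactly as you say.

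For the d\'evissage step your route differs from the paper's. You filter directly on $\skp{X}_U$ by powers of an invertible ideal $\mathcal I$ obtained from a \emph{finite-type} structure on $X\setminus U$. The paper instead descends to a finite modification: it writes $F\cong p^*F'_0$ with $F'_0\in\Coh^{\leq 1}_{Z'}(X')$ for some $X'\in\Mdf(X,U)$, constructs a filtration $F'_0\supset\ldots\supset F'_n=0$ on $X'$ whose successive quotients lie in the image of $\Coh(Z')\to\Coh_{Z'}(X')$ (with $Z'=Z\times_XX'$ carrying the \emph{reduced} structure throughout), and then pulls back. The r\^ole of Lemma~\ref{leq1-exact--lemma} --- applied ``iteratively'', passing to further modifications so that each $F'_i$ and each quotient again has Tor-dimension $\leq 1$ --- is precisely to guarantee that $p^*$ preserves the short exact sequences, so the filtration survives on $\skp{X}_U$ with quotients in $\Coh(\tilde Z)$.

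The obstacle you flag is real for your approach and is what the paper's detour through Lemma~\ref{leq1-exact--lemma} is designed to avoid. Your graded pieces $\mathcal I^kF/\mathcal I^{k+1}F$ are $\O_{\skp{X}_U}/\mathcal I$-modules, whereas $\O_{\tilde Z}=\O_{\skp{X}_U}/\mathcal I_{\mathrm{red}}$ in the standing convention; without noetherian hypotheses on $X$ there is no bound $\mathcal I_{\mathrm{red}}^n\subseteq\mathcal I$, so the further d\'evissage from $\Coh(\O/\mathcal I)$ to $\Coh(\tilde Z)$ you would need is not automatic. The paper never leaves the reduced structure, so this reconciliation problem does not arise. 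Your argument would go through if you either supplied a comparison $\K(\Coh(\O_{\skp{X}_U}/\mathcal I))\simeq\K(\Coh(\tilde Z))$ or, more simply, built the filtration at a finite level as the paper does.
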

\begin{proof}
By Propostion~\ref{ZR_localisation--prop} the category $\Modfp_{\tilde Z}(\skp{X}_U)$ is a right-s-filtering subcategory of the exact category $\Modfp(\skp{X}_U)$ and the quotient is equivalent to $\Modfp(U)$. Applying Schlichting's localisation theorem for additive categories \cite[2.10]{schlichting04} yields the desired fibre sequence.
\end{proof}

\begin{lemma} \label{connective-k-theory-special-fibre--lem}
The canonical map
	\[
	\K_{\geq 0}\bigl( \Coh(\tilde{Z}) \bigr) \To \K_{\geq 0}\bigl( \Coh_{\tilde{Z}}(\skp{X}_U) \bigr)
	\]
between the connective covers of K-theory spectra is an equivalence.
\end{lemma}
\begin{proof}
We claim that that the fully faithful pushforward functor $\Coh(\tilde{Z}) \inj \Coh_{\tilde{Z}}(\skp{X}_U)$ satisfies the conditions of the D\'{e}vissage Theorem \cite[V.4.1]{weibel}.
Let $F \in \Coh_{\tilde{Z}}(\skp{X}_U)$. Using iteratively Lemma~\ref{leq1-exact--lemma} and Lemma~\ref{platification-support-lemma}, there exist $X' \in \Mdf(X,U)$ and a filtration
	\[
	F'_0 \supset F'_1 \supset \ldots \supset F'_n = 0
	\]
with $F_i' \in \Coh^{\leq 1}_{Z'}(X')$ such that $F\cong p^*F_0'$ and such that the quotients $F_i'/F_{i-1}'$ lie in the essential image of the functor $\Coh(Z') \to \Coh_{Z'}(X')$ where $Z' = Z\times_XX'$. Then pulling back to $\skp{X}_U$ yields a filtration of $F$ such that the successive quotients lie in $\Coh(\tilde{Z})$ as desired.
\end{proof}
\begin{remark}
Combining Theorem~\ref{ZR_negative_K_support_vanishing--thm} above with Theorem~\ref{ZR_K-theory_support_g-theory--thm} below we see that the connective cover 
$\K_{\geq 0}\bigl( \Coh_{\tilde{Z}}(\skp{X}_U) \bigr) \To \K\bigl( \Coh_{\tilde{Z}}(\skp{X}_U) \bigr)$
is an equivalence. On the hand, the author does not know whether or not the connective cover 
$\K_{\geq 0}\bigl( \Coh(\tilde{Z}) \bigr) \To \K\bigl( \Coh(\tilde{Z}) \bigr)$
is an equivalence as well.
\end{remark}

\begin{thm} \label{ZR_K-theory_support_g-theory--thm}
Assume that $X$ is divisorial and noetherian and that $U$ is dense in $X$. Then we have an equivalences of spectra
	\[
	\K\bigl(\skp{X}_U \on \tilde{Z}\bigr) \simeq \K\bigl( \Coh_{\tilde{Z}}(\skp{X}_U) \bigr)
	\simeq \Ge\bigl(\skp{X}_U \on \tilde{Z}\bigr).
	\]
\end{thm}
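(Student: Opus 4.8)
The plan is to reduce the statement to a comparison between $\K$-theory with support defined via vector bundles and $\G$-theory with support, and then invoke the fibre sequence from Proposition~\ref{ZR-G-theory-fibre-sequence--prop}. Concretely, I would first observe that by definition $\K(\skp{X}_U\on\tilde Z)=\fib(\K(\Vect(\skp{X}_U))\to\K(U))$ and $\Ge(\skp{X}_U\on\tilde Z)=\fib(\K(\Modfp(\skp{X}_U))\to\Ge(U))$, so there is a canonical map $\K(\skp{X}_U\on\tilde Z)\to\Ge(\skp{X}_U\on\tilde Z)$ induced by the inclusion $\Vect(\skp{X}_U)\inj\Modfp(\skp{X}_U)$ together with the identity on $U$ (note $\Vect(U)\to\Modfp(U)=\Coh(U)$ realises $\K(U)\to\Ge(U)$). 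By Proposition~\ref{ZR-G-theory-fibre-sequence--prop} the right-hand side $\Ge(\skp{X}_U\on\tilde Z)$ is equivalent to $\K(\Coh(\tilde Z))$, so it suffices to show that the comparison map $\K(\skp{X}_U\on\tilde Z)\to\Ge(\skp{X}_U\on\tilde Z)$ is an equivalence.

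To prove that, I would pass to the colimit description. By the lemma following Definition~\ref{K-theory-ZR--def}, $\K(\skp{X}_U\on\tilde Z)\simeq\colim_{X'\in\Mdf(X,U)}\K(X'\on Z')$ with $Z'=X'\times_XU$-complement, i.e.\ $Z'=X'\setminus U$ since $X'\to X$ is an isomorphism over $U$. On the other hand, using Theorem~\ref{leq1-resumee--thm} and the localisation sequence, I claim $\Ge(\skp{X}_U\on\tilde Z)\simeq\colim_{X'\in\Mdf(X,U)}\K(\Coh_{Z'}(X'))$. For each fixed reduced divisorial scheme $X'$ (we may restrict to reduced $X'$ by Lemma~\ref{ZR-reduced-lem}), the scheme $X'$ need not be regular, so $\K(X'\on Z')\to\K(\Coh_{Z'}(X'))$ is \emph{not} an equivalence in general. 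This is the crux: the comparison becomes an equivalence only after passing to the limit over all $U$-modifications, because a given perfect-complex-with-support (equivalently, bounded complex of vector bundles acyclic over $U$) becomes, after a suitable $U$-modification, resolvable by vector bundles — this is exactly the content of Lemma~\ref{platification-support-lemma}, which says that finitely presented modules supported on $Z'$ acquire Tor-dimension $\leq 1$ after a blow-up. So the argument I would run is: $\K(\skp{X}_U\on\tilde Z)$, computed with vector bundles on $\skp{X}_U$, agrees with the $\K$-theory of the exact category $\Mod^{\mathrm{fp},\leq 1}_{\tilde Z}(\skp{X}_U)$ of modules of Tor-dimension $\leq 1$ with support on $\tilde Z$ (by a resolution-theorem argument à la \cite[6.4]{kst}: every such module admits a two-term vector bundle resolution, and $\K$ of the Waldhausen category of such complexes computes $\K$ with support), and by Theorem~\ref{leq1-resumee--thm} this category is equivalent to $\Coh_{\tilde Z}(\skp{X}_U)$, whose $\K$-theory is $\Ge(\skp{X}_U\on\tilde Z)\simeq\K(\Coh(\tilde Z))$ by Proposition~\ref{ZR-G-theory-fibre-sequence--prop}.

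The main obstacle I anticipate is the first step: identifying $\K(\skp{X}_U\on\tilde Z)$ — defined as the fibre of $\K(\Vect(\skp{X}_U))\to\K(U)$ — with the $\K$-theory of the exact category of Tor-dimension-$\leq 1$ modules supported on $\tilde Z$. This is a relative resolution theorem and requires care: one wants that perfect complexes on $\skp{X}_U$ which are acyclic on $U$ are, up to the relevant equivalences, exactly generated by $\Mod^{\mathrm{fp},\leq1}_{\tilde Z}(\skp{X}_U)$ under extensions and shifts, and that the resolution property holds (every such complex is quasi-isomorphic to a bounded complex of vector bundles, which one gets from the colimit over $\Mdf(X,U)$ by Lemma~\ref{platification-support-lemma} applied on each $X'$, taking $Y=X'$, $f=\mathrm{id}$). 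Once this identification and the resolution/dévissage machinery of Kerz-Strunk-Tamme is in place, the chain of equivalences
\[
\K(\skp{X}_U\on\tilde Z)\;\simeq\;\K\bigl(\Mod^{\mathrm{fp},\leq1}_{\tilde Z}(\skp{X}_U)\bigr)\;\simeq\;\K\bigl(\Coh_{\tilde Z}(\skp{X}_U)\bigr)\;\simeq\;\K\bigl(\Coh(\tilde Z)\bigr)
\]
follows formally from Theorem~\ref{leq1-resumee--thm} and Proposition~\ref{ZR-G-theory-fibre-sequence--prop}.
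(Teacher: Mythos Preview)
Your overall chain of equivalences
\[
\K(\skp{X}_U\on\tilde Z)\;\simeq\;\K\bigl(\Coh_{\tilde Z}^{\leq 1}(\skp{X}_U)\bigr)\;\simeq\;\K\bigl(\Coh_{\tilde Z}(\skp{X}_U)\bigr)\;\simeq\;\K\bigl(\Coh(\tilde Z)\bigr)
\]
is exactly the one the paper runs, with the second equivalence coming from Theorem~\ref{leq1-resumee--thm} and the third from Proposition~\ref{ZR-G-theory-fibre-sequence--prop}. So the strategy is correct and matches the paper.

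Where your write-up differs is in the justification of the \emph{first} equivalence, and there you are making life harder for yourself. You correctly note that $\K(X'\on Z')\to\K(\Coh_{Z'}(X'))$ need not be an equivalence at finite level, and you propose to remedy this by a limiting argument using platification. But the paper does not need this: it observes that the $U$-admissible blow-ups are cofinal in $\Mdf(X,U)$, and for such $X'$ the complement $Z'\hookrightarrow X'$ is a \emph{regular} closed immersion (being a Cartier divisor). For a divisorial scheme with a regularly immersed closed subscheme, Thomason--Trobaugh \cite[5.7~(e)]{tt90} already gives $\K(X'\on Z')\simeq\K(\Coh_{Z'}^{\leq 1}(X'))$ at each level. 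Passing to the colimit then yields $\K(\skp{X}_U\on\tilde Z)\simeq\K(\Coh_{\tilde Z}^{\leq 1}(\skp{X}_U))$ directly, with no resolution-theorem gymnastics on the limit space. Platification (Lemma~\ref{platification-support-lemma}) is used only for the \emph{second} step, to pass from Tor-dimension $\leq 1$ to all of $\Coh_{\tilde Z}$, and that is already packaged in Theorem~\ref{leq1-resumee--thm}. So your ``main obstacle'' evaporates once you restrict to the cofinal family of blow-ups and invoke \cite[5.7~(e)]{tt90} levelwise.
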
 
\begin{proof} 
Every $U$-modification of $X$ has an ample family of line bundles since it is quasi-projective over a scheme admitting such a family \cite[2.1.2~(h)]{tt90}. Moreover, those $X'\in\Mdf(X,U)$ such that $X'\setminus U \inj X'$ is a regular closed immersion form a cofinal subsystem (since $U$-admissible blow-ups are cofinal). Thus we have $\K(X'\on Z') \simeq \K(\Coh_{Z'}^{\leq 1} (X'))$ for every $X'\in\Mdf(X,U)$; in the connective case this is due to Thomason \cite[5.7~(e)]{tt90} and the general case is due to Hiranouchi-Mochizuki \cite[Thm.~3.3]{hiranouchi-mochizuki}\footnote{In \emph{loc.~cit.}\ the category $\Coh_{Z'}^{\leq 1} (X')$ is denoted by $\mathbf{Wt}^1(X'\on Z')$ and it is shown that the category of bounded complexes in this category is equivalent to the derived category $\Perff(X'\on Y')$.}. Hence we have that 
	\begin{align*}
	\K(\skp{X}_U\on\tilde{Z}) 
	&\simeq \colim_{X'\in\Mdf(X,U)} \K(X'\on Z') 
	&\simeq \colim_{X'\in\Mdf(X,U)} \K\bigl(\Coh_{Z'}^{\leq 1} (X') \bigr)
	&\simeq \K\bigl((\Coh_{\tilde{Z}}^{\leq 1}(\skp{X}_U)\bigr).
	\end{align*}
By Theorem~\ref{leq1-resumee--thm} we have an equivalence  
$\K\bigl((\Coh_{\tilde{Z}}^{\leq 1}(\skp{X}_U)\bigr) \To \K\bigl((\Coh_{\tilde{Z}}(\skp{X}_U)\bigr)$
so that we are done by Proposition~\ref{ZR-G-theory-fibre-sequence--prop}.
\end{proof}

\begin{cor} \label{ZR-K=G-U-regular--cor}
Assume that $X$ is divisorial and noetherian and that $U$ is \emph{regular} and dense in $X$. Then the canonical map
	\[
	\K(\skp{X}_U) \To \Ge(\skp{X}_U)
	\]
is an equivalence.
\end{cor}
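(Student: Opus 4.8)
The plan is to reduce the statement $\K(\skp{X}_U) \simeq \Ge(\skp{X}_U)$ to the already-established comparison on supports (Theorem~\ref{ZR_K-theory_support_g-theory--thm}) together with the regularity of $U$. First I would write down the commutative diagram of fibre sequences relating K-theory and G-theory of $\skp{X}_U$ to those of the open subscheme $U$:
\[
\begin{xy}\xymatrix{
\K(\skp{X}_U\on\tilde{Z}) \ar[r]\ar[d] & \K(\skp{X}_U) \ar[r]\ar[d] & \K(U) \ar[d] \\
\Ge(\skp{X}_U\on\tilde{Z}) \ar[r] & \Ge(\skp{X}_U) \ar[r] & \Ge(U)
}\end{xy}
\]
The top row is the defining fibre sequence of $\K(\skp{X}_U\on\tilde Z)$ (Definition~\ref{K-theory-ZR--def}), and the bottom row is the fibre sequence of Proposition~\ref{ZR-G-theory-fibre-sequence--prop} (using $X$ divisorial, $U$ noetherian and dense), noting that $\Ge(U)=\K(\Coh(U))$ since $U$ is noetherian. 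The vertical maps are induced by the inclusion $\Vect\hookrightarrow\Modfp$. Since fibre sequences are determined by any two terms, it suffices to show that two of the three vertical arrows are equivalences.

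The right-hand vertical map $\K(U)\to\Ge(U)$ is an equivalence because $U$ is regular and noetherian: this is the classical theorem that for a regular noetherian scheme the inclusion $\Vect(U)\hookrightarrow\Coh(U)$ induces an equivalence on K-theory (Quillen's resolution theorem, cited e.g.\ as \cite[V.3.5.1]{weibel} or already invoked in the introduction). For the left-hand vertical map $\K(\skp{X}_U\on\tilde Z)\to\Ge(\skp{X}_U\on\tilde Z)$, I would combine Theorem~\ref{ZR_K-theory_support_g-theory--thm}, which gives $\K(\skp{X}_U\on\tilde Z)\simeq\K(\Coh(\tilde Z))$, with the last assertion of Proposition~\ref{ZR-G-theory-fibre-sequence--prop}, which gives $\K(\Coh(\tilde Z))\simeq\Ge(\skp{X}_U\on\tilde Z)$; one then checks that the composite of these two equivalences is compatible with the natural map, i.e.\ that both factor through $\K(\Coh^{\leq 1}_{\tilde Z}(\skp{X}_U))\to\K(\Coh_{\tilde Z}(\skp{X}_U))$ as in the proof of Theorem~\ref{ZR_K-theory_support_g-theory--thm}. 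Having two vertical maps that are equivalences forces the middle one $\K(\skp{X}_U)\to\Ge(\skp{X}_U)$ to be an equivalence, which is the claim.

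The main obstacle I anticipate is the bookkeeping of naturality: one must verify that the identifications coming from Theorem~\ref{ZR_K-theory_support_g-theory--thm} and Proposition~\ref{ZR-G-theory-fibre-sequence--prop} genuinely assemble into a commutative diagram with the canonical comparison map $\K\to\Ge$, rather than merely abstract equivalences of spectra. Concretely, one needs that the equivalence $\K(\skp{X}_U\on\tilde Z)\simeq\K(\Coh^{\leq 1}_{\tilde Z}(\skp{X}_U))$ is induced by the inclusion $\Vect\hookrightarrow\Modfp$ passed to supports, and that the D\'evissage equivalence $\K(\Coh(\tilde Z))\simeq\K(\Coh_{\tilde Z}(\skp{X}_U))$ is the pushforward; both are already how those results were proved, so this should amount to tracing through the constructions rather than any new input. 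Once naturality is in place, the conclusion is immediate from the five-lemma for fibre sequences of spectra.
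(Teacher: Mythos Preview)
Your proposal is correct and follows essentially the same approach as the paper: both set up the commutative diagram of fibre sequences and conclude via the outer vertical equivalences (Theorem~\ref{ZR_K-theory_support_g-theory--thm} on the left, regularity of $U$ on the right). The only cosmetic difference is that the paper writes the bottom-left corner directly as $\K(\Coh(\tilde Z))$ using the fibre sequence of Proposition~\ref{ZR-G-theory-fibre-sequence--prop}, whereas you write it as $\Ge(\skp{X}_U\on\tilde Z)$ and then invoke the same proposition to identify the two; your naturality remark is well taken but, as you note, amounts only to tracing the constructions already given.
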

\begin{proof}
We have a commuative diagram 
\[\begin{xy}\xymatrix{
	\K(\skp{X}_U\on\tilde{Z}) \ar[r] \ar[d] 
	& \K(\skp{X}_U) \ar[r] \ar[d] & \K(U) \ar[d] \\
	\K(\Coh_{\tilde{Z}}(\skp{X}_U)) \ar[r]
	& \Ge(\skp{X}_U) \ar[r] & \Ge(U)
}\end{xy}\]
where the upper line is a fibre sequence by design and the lower line is a fibre sequence by Proposition~\ref{ZR-G-theory-fibre-sequence--prop}. Furthermore, the first vertical map is an equivalence by Theorem~\ref{ZR_K-theory_support_g-theory--thm} and the third vertical map is an equivalence since $U$ is regular.
\end{proof}

\subsection{Homotopy invariance}

\begin{defin}
For an integer $n\geq 1$ we define
	\[
	\skp{X}_U[t_1,\ldots,t_n] := \lim_{X'\in\Mdf(X,U)} X'[t_1,\ldots,t_n].
	\]
\end{defin}

\begin{lemma}
There exists a canonical isomorphism
	\[
	\skp{X}_U[t_1,\ldots,t_n] \To[\cong] \skp{X}_U \times_XX[t_1,\ldots,t_n].
	\]
of locally ringed spaces.
\end{lemma}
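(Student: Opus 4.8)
The plan is to reduce the claim to the corresponding statement at the level of the individual $U$-modifications $X' \in \Mdf(X,U)$, where the isomorphism is the evident scheme-theoretic one, and then to pass to the limit. First I would observe that for each $X' \in \Mdf(X,U)$ one has the canonical isomorphism of schemes
\[
X'[t_1,\ldots,t_n] \;\cong\; X' \times_X X[t_1,\ldots,t_n],
\]
since $X'[t_1,\ldots,t_n] = X' \times_{\Spec\Z} \Spec\Z[t_1,\ldots,t_n]$ and the right-hand side is $X' \times_X (X \times_{\Spec\Z} \Spec\Z[t_1,\ldots,t_n]) = X' \times_X X[t_1,\ldots,t_n]$; this is just associativity of fibre products together with the fact that $X'[t_1,\ldots,t_n]$ really means base change of $X'$ along $\A^n_\Z \to \Spec\Z$. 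These isomorphisms are visibly compatible with the transition morphisms in $\Mdf(X,U)$, so they assemble into an isomorphism of the two diagrams $X' \mapsto X'[t_1,\ldots,t_n]$ and $X' \mapsto X' \times_X X[t_1,\ldots,t_n]$ indexed by $\Mdf(X,U)$, hence induce an isomorphism on limits taken in locally ringed spaces.

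The remaining point is to identify the limit $\lim_{X'} \bigl(X' \times_X X[t_1,\ldots,t_n]\bigr)$ with $\skp{X}_U \times_X X[t_1,\ldots,t_n]$, i.e.\ to commute the (cofiltered) limit over $\Mdf(X,U)$ past the fibre product with the fixed scheme $X[t_1,\ldots,t_n]$ over $X$. For this I would invoke the fact, contained in the existence statement for limits of locally ringed spaces used throughout the paper (Proposition~\ref{limit_spaces--prop}; compare \cite[ch.~0, \S2.2]{fuji-kato}), that limits of locally ringed spaces commute with finite limits, in particular with fibre products along a fixed morphism: given a cofiltered system $(Y_\alpha)$ over $X$ and a morphism $W \to X$, the natural map $(\lim_\alpha Y_\alpha) \times_X W \to \lim_\alpha (Y_\alpha \times_X W)$ is an isomorphism. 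Applying this with $Y_\alpha = X'$ and $W = X[t_1,\ldots,t_n]$ gives
\[
\skp{X}_U \times_X X[t_1,\ldots,t_n] \;=\; \Bigl(\lim_{X'} X'\Bigr)\times_X X[t_1,\ldots,t_n] \;\xrightarrow{\ \cong\ }\; \lim_{X'} \bigl(X'\times_X X[t_1,\ldots,t_n]\bigr),
\]
and composing with the isomorphism of diagrams from the first paragraph yields the desired isomorphism with $\skp{X}_U[t_1,\ldots,t_n] = \lim_{X'} X'[t_1,\ldots,t_n]$.

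The only subtlety worth pausing on — and the step I expect to require the most care — is the interchange of the limit with the fibre product \emph{in the category of locally ringed spaces} rather than in schemes, since the limit $\skp{X}_U$ is not a scheme. One must check that $\skp{X}_U \times_X X[t_1,\ldots,t_n]$, the fibre product in locally ringed spaces, really does satisfy the universal property of the limit of the $X'\times_X X[t_1,\ldots,t_n]$; this is where the compatibility of $\mathbf{LocRingSpc}$ with the relevant limits (coherence and sobriety of the spaces, quasi-compactness of transition maps, behaviour of structure sheaves as filtered colimits) enters, and it is precisely the content of the cited structural results, so no new argument is needed beyond quoting them. Once that interchange is granted, the statement is formal.
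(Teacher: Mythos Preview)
Your proposal is correct and follows essentially the same approach as the paper: both arguments reduce to the commutation of limits, identifying $X'[t_1,\ldots,t_n]$ with the fibre product $X'\times_X X[t_1,\ldots,t_n]$ and then interchanging the cofiltered limit over $\Mdf(X,U)$ with this pullback. The paper's proof is a one-line chain of equalities (``different limits commute among each other''), whereas you spell out the category-theoretic justification and flag the locally-ringed-space subtlety more carefully; but the content is the same.
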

\begin{proof}
This follows since different limits commute among each other, namely:
	\begin{align*}
	\skp{X}_U[t_1,\ldots,t_n]
	&= \lim_{X'\in\Mdf(X,U)} X'[t_1,\ldots,t_n] \\
	&= \lim_{X'\in\Mdf(X,U)}\,\lim(X'\to X\ot X[t_1,\ldots,t_n]) \\
	&= \lim\bigl( (\lim_{X'\in\Mdf(X,U)}X') \to X \ot X[t_1,\ldots,t_n]\bigr) \\
	&= \skp{X}_U \times_XX[t_1,\ldots,t_n].
	\end{align*}
\end{proof}

\begin{defin}
For a locally ringed space $Y$ we define
	\[
	\Nil(Y) := \fib\bigl( \K(\Nilu(\Vect(Y))) \to \K(Y) \bigr)
	\]
where the map is induced by the forgetful functor $\Nilu(Y) \to \Vect(Y)$ (\cf{} Definition~\ref{nilu--defin}).\footnote{The author apologises for the possibly confusing notation which tries to be coherent with existing literature.}
\end{defin}

\begin{lemma} \label{Nil-Nilu_k-fib-seq--lemma}
Let $Y$ be a quasi-compact and quasi-separated scheme. Then the category $\Nilu(\Coh(Y))$ is an abelian category and $\Nilu(\Vect(Y))$ is an exact subcategory. Furthermore the fibre sequence
	\[
	\Nil(Y) \To \K(\Nilu(\Vect(Y))) \To \K(Y)
	\]
splits so that we get a decomposition
	\[
	\K(\Nilu(\Vect(Y))) \simeq \K(Y) \times \Nil(Y).
	\]
\end{lemma}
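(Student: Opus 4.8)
The plan is to establish the three assertions in order, since each feeds into the next. First I would show that $\Nilu(\Coh(Y))$ is abelian. The category $\Coh(Y)$ is abelian because $Y$ is quasi-compact and quasi-separated (so noetherian-type arguments on affines apply, or rather: $\Coh(Y)$ is abelian for any scheme). An object of $\Nilu(\Coh(Y))$ is a pair $(F,\nu)$; a morphism $(F,\nu)\to(G,\mu)$ is an $\O_Y$-linear $f\colon F\to G$ with $f\nu=\mu f$. Kernels and cokernels are computed in $\Coh(Y)$ and inherit the induced endomorphism: if $f\nu=\mu f$ then $\nu$ restricts to $\ker f$ and descends to $\coker f$, and the restriction/descent is again nilpotent (same $k$ works). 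One checks the standard abelian-category axioms pointwise; the only mild point is that the induced endomorphism on a subquotient is still nilpotent, which is immediate. Equivalently, $\Nilu(\Coh(Y))$ is the category of finitely-generated modules over the (non-commutative, non-noetherian in general) sheaf of rings $\O_Y[\nu]/(\text{nilpotence})$ — but the direct verification is cleanest, so I would just do that.

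Next, $\Nilu(Y)=\Nilu(\Vect(Y))$ sits inside $\Nilu(\Coh(Y))$ as the full subcategory of pairs $(F,\nu)$ with $F$ a vector bundle. This is an exact subcategory: it is closed under extensions inside $\Nilu(\Coh(Y))$ because $\Vect(Y)$ is closed under extensions inside $\Coh(Y)$ (an extension of vector bundles is a vector bundle on a qcqs scheme — locally free of the expected rank), and the extension carries a compatible nilpotent endomorphism when the sub and quotient do. So $\Nilu(Y)$ is an exact category in the usual way, and the forgetful functor $\Nilu(Y)\to\Vect(Y)$ is exact.

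For the fibre sequence, the forgetful functor $U\colon\Nilu(Y)\to\Vect(Y)$ has a section $s\colon\Vect(Y)\to\Nilu(Y)$, $E\mapsto(E,0)$, with $U\circ s=\id$. Both are exact, so on K-theory $\K(U)\circ\K(s)=\id$, exhibiting $\K(\Vect(Y))=\K(Y)$ as a retract of $\K(\Nilu(Y))$. The fibre sequence $\Nil(Y)\to\K(\Nilu(Y))\to\K(Y)$ is the definition of $\Nil(Y)$; the existence of the splitting $\K(s)$ of the base map means the fibre sequence of spectra splits, giving $\K(\Nilu(Y))\simeq\K(Y)\times\Nil(Y)$ (a fibre sequence of spectra with a splitting of the projection is equivalent to a product, since $\Sp$ is stable — the splitting produces the complementary summand).

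\textbf{Main obstacle.} I expect the only genuinely substantive point to be verifying that $\Nilu(\Coh(Y))$ is abelian, and more precisely that the induced endomorphism on a subobject or quotient object is again nilpotent; but this is automatic from the definition (the same exponent $k$ witnesses nilpotence upstairs and downstairs), so in the end everything is formal. The splitting and the product decomposition are pure stable-homotopy-theory formalism once the section $s$ is in place, and the closure properties of $\Vect(Y)$ inside $\Coh(Y)$ are standard facts for qcqs schemes.
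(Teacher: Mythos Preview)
Your proposal is correct and follows exactly the approach of the paper's proof, which is a two-line sketch: the first part ``follows straightforwardly from the fact that $\Vect(Y)$ is an exact subcategory of the abelian category $\Coh(Y)$,'' and the splitting is given by the section $E\mapsto(E,0)$. You have simply unpacked these two sentences with the expected details (kernels/cokernels inherit nilpotent endomorphisms, closure under extensions, a split fibre sequence in $\Sp$ decomposes as a product).
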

\begin{proof}
The first part follows straighforwardly from the fact that $\Vect(Y)$ is an exact subcategory of the abelian category $\Coh(Y)$. The forgetful funtor $\Nilu(\Vect(Y)) \to \Vect(Y)$ is split by the functor sending a vector bundle $E$ to the pair $(E,0)$ so that the claim follows.
\end{proof}

\begin{remark} \label{Nil-fibre-sequence--remark}
There exists a split fibre sequence
	\[
	\Nil(\skp{X}_U) \To \K(\Nilu(\skp{X}_U)) \To \K(\skp{X}_U)
	\]
defined as the colimit of the respective fibre sequeces for every $X'\in\Mdf(X,U)$ coming from Lemma~\ref{Nil-Nilu_k-fib-seq--lemma}. Hence there exists a split fibre sequence
	\[
	\Nil(\skp{X}_U\on\tilde{Z}) \To \K(\Nilu(\skp{X}_U\on\tilde{Z})) \To \K(\skp{X}_U\on\tilde{Z})
	\]
which is defined to be the fibre of the map
	\[\begin{xy}\xymatrix{
	\Nil(\skp{X}_U) \ar[r] \ar[d] 
	& \K(\Nilu(\skp{X}_U)) \ar[r] \ar[d] & \K(\skp{X}_U) \ar[d] \\
	\Nil(U) \ar[r]  & \K(\Nilu(U)) \ar[r] & \K(U) 
}\end{xy}\]
of fibre sequences.
\end{remark}

\begin{thm} \label{K-ZR-support-homotopy-invariance--thm}
Assume that $X$ is divisorial and noetherian and that $U$ is dense in $X$. Denote $\tilde{Z}:=\skp{X}_U\setminus U$. Then the canonical map
	\[
	\K\bigl( \skp{X}_U\on\tilde{Z} \bigr) \To \K\bigl( \skp{X}_U[t]\on\tilde{Z}[t] \bigr)
	\]
is an equivalence.
\end{thm}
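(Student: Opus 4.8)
The plan is to reduce homotopy invariance of the K-theory with support to homotopy invariance of G-theory, exploiting the comparison established in Theorem~\ref{ZR_K-theory_support_g-theory--thm}. Indeed, by that theorem we have equivalences $\K(\skp{X}_U\on\tilde{Z}) \simeq \K(\Coh(\tilde{Z}))$ and, applying it to $\skp{X[t]}_{U[t]} \cong \skp{X}_U[t]$ together with the identification $\tilde{Z}[t] = \skp{X}_U[t]\setminus U[t]$ and $\tilde{Z}[t] = \widetilde{Z[t]}$ coming from Remark~\ref{structure-on-tilde-Z--remark}, also $\K(\skp{X}_U[t]\on\tilde{Z}[t]) \simeq \K(\Coh(\tilde{Z}[t]))$. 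So the statement becomes: the pullback map $\K(\Coh(\tilde{Z})) \to \K(\Coh(\tilde{Z}[t]))$ is an equivalence, where $\tilde{Z}=\skp{X}_U\setminus U$ carries the locally ringed space structure $\lim_{X'} X'_Z$ with $X'_Z = X'\times_X Z$ and $\tilde Z[t]$ correspondingly.

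The key point is that each $X'_Z$ is a \emph{noetherian} scheme: it is a closed subscheme of the quasi-projective $X$-scheme $X'$, and $Z = X\setminus U$ with its reduced structure is noetherian whenever $U$ is noetherian and dense (here one uses that $X$ is quasi-compact quasi-separated and $Z$ is a finitely-presented-over-a-noetherian-base-type closed subscheme; more directly, since $U$ is a noetherian dense open, $Z$ is cut out by a finitely generated ideal sheaf and the blow-ups appearing are noetherian). For a noetherian scheme $W$, G-theory satisfies homotopy invariance: $\K(\Coh(W)) \simeq \K(\Coh(W[t]))$ (Quillen's theorem, \cite[V.6.3]{weibel} or the fundamental theorem for $G$-theory). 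By Theorem~\ref{leq1-resumee--thm}, $\Coh_{\tilde Z}(\skp{X}_U) \simeq \colim_{X'} \Coh_{Z'}(X')$, and since the transition maps are exact functors between abelian categories and K-theory commutes with filtered colimits of such, $\K(\Coh(\tilde{Z})) \simeq \colim_{X'} \K(\Coh(X'_Z))$. The same holds after adjoining $t$, compatibly. Passing to the colimit over $\Mdf(X,U)$ of the homotopy-invariance equivalences for the noetherian schemes $X'_Z$ then yields the desired equivalence.

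Thus the proof has three steps, to be carried out in this order. First, identify $\K(\skp{X}_U\on\tilde Z)$ and $\K(\skp{X}_U[t]\on\tilde Z[t])$ with $\K(\Coh(\tilde Z))$ and $\K(\Coh(\tilde Z[t]))$ via Theorem~\ref{ZR_K-theory_support_g-theory--thm}, using the isomorphism $\skp{X[t]}_{U[t]}\cong\skp{X}_U[t]$ and checking that $X[t]$ is still divisorial and $U[t]$ noetherian and dense in $X[t]$ (divisoriality of $X[t]$ follows from \cite[2.1.2~(h)]{tt90} as $X[t]\to X$ is quasi-projective; density and noetherianity of $U[t]$ are routine). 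Second, rewrite both sides as filtered colimits $\colim_{X'\in\Mdf(X,U)} \K(\Coh(X'_Z))$ and $\colim_{X'\in\Mdf(X,U)} \K(\Coh(X'_Z[t]))$ using Theorem~\ref{leq1-resumee--thm} applied on both $X$ and $X[t]$ and the exactness of transition functors. Third, invoke homotopy invariance of G-theory for each noetherian scheme $X'_Z$ and take the colimit.

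The main obstacle is the second step: one must ensure that the colimit description of $\Coh_{\tilde Z[t]}(\skp{X}_U[t])$ obtained from Theorem~\ref{leq1-resumee--thm} (applied to the base $X[t]$, indexed by $\Mdf(X[t],U[t])$) agrees, after passing to K-theory, with the colimit indexed by $\Mdf(X,U)$ of $\K(\Coh(X'_Z[t]))$. This requires the cofinality argument of the Lemma identifying $\skp{X[t]}_{U[t]}$ with $\skp{X}_U[t]$: base changes of $U$-modifications of $X$ are cofinal among $U[t]$-modifications of $X[t]$, so the two index categories give the same colimit. One also needs that $X'_Z[t] = X'_Z\times_X X[t]$ is the correct scheme structure on the complement $\widetilde{Z[t]}$ at the finite level, which is immediate since $(X'[t])\times_{X[t]} Z[t] = (X'\times_X Z)[t] = X'_Z[t]$. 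Everything else is a matter of assembling previously established equivalences and the classical fundamental theorem for $G$-theory of noetherian schemes.
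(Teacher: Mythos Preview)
Your strategy---reduce to G-theory via Theorem~\ref{ZR_K-theory_support_g-theory--thm} applied to both $X$ and $X[t]$, and then invoke Quillen's homotopy invariance for $\Ge$ termwise in the colimit---is different from the paper's route, which goes through the $\Nil$/$\NK$ machinery and a D\'evissage on $\Nilu(\Coh(\tilde Z))$. Unfortunately your argument has a genuine gap.

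The claim that each $X'_Z$ is noetherian is not justified by the hypotheses and is in fact false in general. The theorem only assumes that $X$ is reduced divisorial and that $U$ is noetherian and dense; it does \emph{not} assume $X$ noetherian, and neither ``$U$ noetherian dense'' nor ``$Z$ cut out by a finitely generated ideal'' forces $Z$ to be noetherian. For a concrete instance, take $X=\Spec(V)$ with $V$ a valuation ring of rank $2$ whose rank-$1$ quotient $V/\mathfrak p$ is non-discrete: then $X$ is affine (hence divisorial), $U=\Spec(\Frac V)$ is a noetherian dense open, but $Z=\Spec(V/\mathfrak p)$ is a non-noetherian valuation ring. In such situations Quillen's homotopy invariance $\Ge(X'_Z)\simeq\Ge(X'_Z[t])$ is simply not available, so your third step does not go through. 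A secondary issue is that the transition functors $p^*\colon\Coh_{Z'}(X')\to\Coh_{Z''}(X'')$ are \emph{not} exact on all of $\Coh_{Z'}$; Lemma~\ref{leq1-exact--lemma} only gives exactness on $\Mod^{\mathrm{fp},\leq 1}$, which is why the paper's proof of Theorem~\ref{ZR_K-theory_support_g-theory--thm} passes through $\Coh^{\leq 1}_{Z'}(X')$ rather than $\Coh_{Z'}(X')$. So the sentence ``the transition maps are exact functors between abelian categories'' is also unjustified.

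The paper circumvents all of this by never appealing to $\A^1$-invariance of $\Ge$ at any finite level. Instead it uses the scheme-level identification $\Nil_n\cong\NK_{n+1}$ (which passes to the colimit), the split fibre sequence $\Nil\to\K(\Nilu)\to\K$, and then shows directly that $\K(\Coh(\tilde Z))\to\K(\Nilu(\Coh(\tilde Z)))$ is an isomorphism via D\'evissage: every object $(E,\nu)$ of $\Nilu(\Coh(\tilde Z))$ admits the finite filtration $\ker(\nu)\subset\ker(\nu^2)\subset\cdots$ with successive quotients carrying the zero endomorphism. This filtration argument is valid in \emph{any} abelian category and needs no noetherian hypothesis on $\tilde Z$, which is exactly what your approach lacks. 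If you add the assumption that $X$ is noetherian your outline becomes viable (after repairing the exactness of transition maps by working with $\Coh^{\leq 1}$), but as stated the theorem is strictly more general.
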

\begin{proof}
First note that the map in question identifies with the induced map
	\[
	\K_{\geq 0}\bigl( \skp{X}_U\on\tilde{Z} \bigr) \To \K_{\geq 0}\bigl( \skp{X}_U[t]\on\tilde{Z}[t] \bigr)
	\]
of the respective connective covers due to \cite[Prop~7]{kerz-icm}, \cf{} Theorem~\ref{ZR_negative_K_support_vanishing--thm}; for the target use the relative version with respect to the morphism $X[t]\to X$ which is smooth of finite presentation.

We will show that the cofibre $\NK(\skp{X}_U\on\tilde{Z})$ of the desired equivalence vanishes. For every $n\in\Z$ we have that
	\[
	\Nil_n\bigl( \skp{X}_U\on\tilde{Z} \bigr) \cong \NK_{n+1}\bigl( \skp{X}_U\on\tilde{Z} \bigr)
	\]
by the corresponding statement for schemes \cite[V.8.1]{weibel} since K-theory commutes with filtered colimits of exact categories with exact functors. Thus it suffices to show that $\Nil_n(\skp{X}_U\on\tilde{Z})$ vanishes, \ie{} that the map 
	\[ \tag{$\spadesuit$}
	\K_{\geq 0}\bigl( \Nilu(\skp{X}_U\on\tilde{Z}) \bigr) \To \K_{\geq 0}\bigl( \skp{X}_U\on\tilde{Z} \bigr)
	\]
is an equivalence. In order to see this, consider the following diagram of connective K-theory spectra.
\[ \begin{xy} \xymatrix{
	\K_{\geq 0}\bigl( \Ch^b_{\tilde{Z}}(\Nilu(\skp{X}_U)) \bigr) \ar[d]^\delta \ar[r]^\epsilon
	& \K_{\geq 0}\bigl( \Nilu(\skp{X}_U\on\tilde{Z} \bigr) \ar[r]^-{\spadesuit} \ar[d]^\zeta
	& \K_{\geq 0}\bigl( \skp{X}_U\on\tilde{Z} \bigr) \ar[d]^\alpha
	\\
	\K_{\geq 0}(\bigl( \Ch^b(\Nilu(\Coh_{\tilde{Z}}(\skp{X}_U))) \bigr) 
	& \K_{\geq 0}\bigl( \Nilu(\Coh_{\tilde{Z}}(\skp{X}_U)) \bigr) \ar[r]^-\beta \ar[l]_-\gamma
	& \K_{\geq 0}\bigl( \Coh_{\tilde{Z}}(\skp{X}_U) \bigr) 
} \end{xy} \]
The map $\alpha$ is an equivalence due to Theorem~\ref{ZR_K-theory_support_g-theory--thm}. The map $\beta$ is an equivalence due to the D\'{e}vissage Theorem \cite[V.4.1]{weibel} since  every object $(E,\nu)$ in $\Nilu(\Coh(\tilde{Z}))$ has a filtration
	\[
	(E,\nu) \supset (\ker(\nu),\nu) \supset (\ker(\nu^2),\nu) \supset \ldots \supset 0
	\]
whose quotients have trivial endomorphisms. The map $\gamma$ is an equivalence by the Gillet-Waldhausen Theorem \cite[V.2.2]{weibel}. The map $\delta$ is a composition of equivalences by Lemma \ref{platification-support-lemma}~(iii) and the Thomason-Trobaugh Resolution Theorem \cite[V.3.9]{weibel}, see Lemma~\ref{nil-resolution--lemma} below. The map $\epsilon$ is an equivalence by combining the Waldhausen Localisation Theorem and the Waldhausen Approximation Theorem \cite[V.2.5]{weibel}, see Lemma~\ref{nil-localisation--lemma} below. Thus the map $\zeta$ is an equivalence and we are done.
\end{proof}

\begin{lemma} \label{nil-resolution--lemma}
The canonical maps
\begin{align*}
\K_{\geq 0}\bigl( \Ch^b_{\tilde{Z}}(\Nilu(\skp{X}_U)) \bigr) \To 
\K_{\geq 0}(\bigl( \Ch^b_{\tilde{Z}}(\Nilu(\Modfp(\skp{X}_U))) \bigr) \oT
\K_{\geq 0}(\bigl( \Ch^b(\Nilu(\Coh_{\tilde{Z}}(\skp{X}_U))) \bigr) 
\end{align*}
are equivalences.
\end{lemma}
\begin{proof}
For each of the desired equivalences we want to invoke the Waldhausen Approximation Theorem \cite[V.2.5]{weibel}. In \emph{loc.~cit.}\ we set $\Mcal:=\Nilu(\Modfp(\skp{X}_U))$.

First we treat the right map for which we set $\Bcal:= \Ch^b_{\tilde{Z}}(\Mcal)$ and $\Acal:=\Ch^b_{\tilde{Z}}(\Nilu(\Coh_{\tilde{Z}}(\skp{X}_U)))$ in \emph{loc.~cit.} We claim that every complex in $\Bcal$ is quasi-isomorphic to a complex in $\Acal$. If the complex is concentrated in one degree, then this follows from the equivalence $\Coh_{\tilde{Z}}(\skp{X}_U) \to \Modfp_{\tilde{Z}}(\skp{X}_U)$ of Theorem~\ref{leq1-resumee--thm}. For a general complex $0\to F_m \to\ldots\to F_{n+1} \to[\partial] F_n\to 0$ in $\Acal$ the complexes $0\to F_m \to\ldots\to F_{n+1} \to 0$ and $0\to\coker(\partial)\to 0$ are (by induction) quasi-isomorphic to complexes in $\Acal$ which can be put together so that the initial complex is quasi-isomorphic to a complex in $\Acal$.

For the left equivalence we keep $\Bcal:= \Ch^b_{\tilde{Z}}(\Mcal)$ and reset $\Acal:=\Ch^b_{\tilde{Z}}(\Nilu(\skp{X}_U))$ in \emph{loc.~cit.} For a complex in $\Bcal$ we may assume that it lies in $\Ch^b_{\tilde{Z}}(\Nilu(\Coh_{\tilde{Z}}(\skp{X}_U)))$ by the previous argument. In this case, Lemma~\ref{platification-support-lemma}~(iii) and an induction argument as above yield a quasi-isomorphism to a complex in $\Acal$. 
\end{proof}

\begin{lemma} \label{nil-localisation--lemma}
There is a canonical equivalence
\[
\K_{\geq 0}\bigl( \Ch^b_{\tilde{Z}}(\Nilu(\skp{X}_U)) \bigr) \To  \K_{\geq 0}\bigl( \Nilu(\skp{X}_U\on\tilde{Z}) \bigr).
\]
\end{lemma}
\begin{proof}
By definition we have a fibre sequence
\[
\K_{\geq 0}\bigl( \Nilu(\skp{X}_U\on\tilde{Z}) \bigr) \To \K_{\geq 0}\bigl( \Nilu(\skp{X}_U) \bigr) \To \K_{\geq 0}\bigl( \Nilu(U) \bigr)
\]
so that is suffices to show that the sequence
\[
\K_{\geq 0}\bigl( \Ch^b_{\tilde{Z}}(\Nilu(\skp{X}_U)) \bigr) \To \K_{\geq 0}\bigl( \Nilu(\skp{X}_U) \bigr) \To \K_{\geq 0}\bigl( \Nilu(U) \bigr)
\]
is a fibre sequence as well. Using the Gillet-Waldhausen Theorem \cite[V.2.2]{weibel} we see that it suffices to show that the sequence
\[
\K_{\geq 0}\bigl( \Ch^b_{\tilde{Z}}(\Nilu(\skp{X}_U)) \bigr) \To \K_{\geq 0}\bigl( \Ch^b(\Nilu(\skp{X}_U)) \bigr) \To \K_{\geq 0}\bigl( \Ch^b(\Nilu(U)) \bigr)
\]
is a fibre sequence. The latter follows by combining the Waldhausen Localisation Theorem and the Waldhausen Approximation Theorem \cite[V.2.5]{weibel}. In order to see that the assumptions of the cited theorem are satisfied, note that the assumptions are satisfied without the ``$\Nilu$'', that the operators ``$\Ch^b$'' and ``$\Nilu$'' commute with each other, and that ``$\Nilu$'' preserves the assumptions (as it is basically a diagram category).
\end{proof}

\begin{cor} \label{K-ZR-U-regular-homotopy-invariance--cor}
Assume that $X$ is divisorial and noetherian and that $U$ is \emph{regular} and dense in $X$. Then  the canonical projection $\skp{X}_U[t]\to \skp{X}_U$ induces an equivalence
	\[
	\K\bigl(\skp{X}_U\bigr) \To[\simeq] \K\bigl(\skp{X}_U\bigr[t]\bigr).
	\]
\end{cor}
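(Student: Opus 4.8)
The plan is to deduce Corollary~\ref{K-ZR-U-regular-homotopy-invariance--cor} from the two main results obtained just before it, namely the comparison $\K(\skp{X}_U)\simeq\Ge(\skp{X}_U)$ (Corollary~\ref{ZR-K=G-U-regular--cor}) and the homotopy invariance of relative K-theory (Theorem~\ref{K-ZR-support-homotopy-invariance--thm}), together with the elementary isomorphism $\skp{X}_U[t_1,\ldots,t_n]\cong\skp{X}_U\times_XX[t_1,\ldots,t_n]$ established in this subsection. First I would assemble the commutative square relating the absolute and relative K-theories of $\skp{X}_U$ and of $\skp{X}_U[t]$: the two rows are the defining fibre sequences $\K(\skp{X}_U\on\tilde Z)\to\K(\skp{X}_U)\to\K(U)$ and $\K(\skp{X}_U[t]\on\tilde Z[t])\to\K(\skp{X}_U[t])\to\K(U[t])$, the left vertical map is an equivalence by Theorem~\ref{K-ZR-support-homotopy-invariance--thm}, and the right vertical map $\K(U)\to\K(U[t])$ is an equivalence because $U$ is regular and noetherian, hence satisfies homotopy invariance for Quillen K-theory. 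By the five lemma for fibre sequences of spectra the middle map $\K(\skp{X}_U)\to\K(\skp{X}_U[t])$ is an equivalence; iterating in the variables $t_1,\ldots,t_n$ (or directly using $\skp{X}_U[t_1,\ldots,t_n]$) gives the first claim.

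For the vanishing of negative K-groups I would use the G-theory comparison. Since $U$ is regular and noetherian, $\Ge(U)\simeq\K(U)$ has vanishing negative homotopy groups, and $\Coh(\tilde Z)$ is an abelian category so its negative K-theory vanishes by Schlichting's theorem that negative K-groups of abelian (indeed of noetherian, or more generally of any abelian) categories vanish \cite[Theorem~6, §6]{schlichting06-negative}; hence $\K_{-n}(\Coh(\tilde Z))=0$ for $n\geq1$. Feeding these into the fibre sequence $\K(\Coh(\tilde Z))\to\Ge(\skp{X}_U)\to\Ge(U)$ of Proposition~\ref{ZR-G-theory-fibre-sequence--prop} and taking the long exact sequence of homotopy groups shows $\Ge_{-n}(\skp{X}_U)=0$ for $n\geq1$. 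By Corollary~\ref{ZR-K=G-U-regular--cor} we have $\K(\skp{X}_U)\simeq\Ge(\skp{X}_U)$, so $\K_{-n}(\skp{X}_U)=0$ for $n\geq1$ as well.

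The one genuine input beyond bookkeeping is the vanishing of negative K-theory of an abelian category; everything else is a diagram chase through fibre sequences that have already been set up. If one wants to avoid invoking that general theorem, an alternative is to note that $\tilde Z=\skp{X}_U\setminus U$ is a filtered limit (over $X'\in\Mdf(X,U)$) of the noetherian schemes $Z'=X'\setminus U$ (each a closed subscheme of the noetherian scheme $X'$ — here one uses that $U$ noetherian and the $X'$ are of finite type situations reduce the relevant part to noetherian rings), so $\K(\Coh(\tilde Z))\simeq\colim_{X'}\Ge(Z')$ has vanishing negative homotopy groups because each $\Ge(Z')=\K(\Coh(Z'))$ does and filtered colimits are exact; I would most likely present it this way, as it stays within the noetherian-scheme toolkit already used throughout the paper. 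The main obstacle to watch for is simply making sure the regularity hypothesis on $U$ is used in exactly the two places it is needed (homotopy invariance of $\K(U)$ and the identification $\K(U)\simeq\Ge(U)$), since without it the right-hand vertical maps in the two squares above would fail to be equivalences.
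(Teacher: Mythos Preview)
Your argument for the homotopy-invariance statement is correct and matches the paper's proof verbatim: both set up the same commutative diagram of fibre sequences and conclude by the five lemma, using Theorem~\ref{K-ZR-support-homotopy-invariance--thm} on the left and homotopy invariance of $\K(U)$ on the right.

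The argument for the vanishing of negative K-groups, however, has a genuine gap. You invoke ``Schlichting's theorem that negative K-groups of abelian \ldots\ categories vanish'', but this is not a theorem: Schlichting proved vanishing in degree $-1$ unconditionally and in all negative degrees only for \emph{noetherian} abelian categories; the general statement is now known to be false by Neeman's counterexample, as the paper itself records in Remark~\ref{schlichting-conjecture--remark}. And $\Coh(\tilde Z)$ is in general not noetherian --- that is precisely the point of Example~\ref{ZR-not-coherent--ex}. So this route does not close. Your fallback argument has a related problem: you assert that each $Z'=X'\setminus U$ is a closed subscheme of a noetherian scheme $X'$, but the standing hypothesis is only that $X$ is divisorial (hence qcqs), not noetherian; the $U$-modifications $X'$ need not be noetherian either, so you cannot conclude that $\Coh(Z')$ or $\Coh_{Z'}(X')$ is a noetherian abelian category.

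The paper's terse ``the same way'' means: feed the same fibre sequence $\K(\skp{X}_U\on\tilde Z)\to\K(\skp{X}_U)\to\K(U)$ into the long exact sequence, using $\K_{-n}(U)=0$ by regularity and $\K_{-n}(\skp{X}_U\on\tilde Z)=0$. The latter vanishing is the content of Kerz's result recorded as Theorem~\ref{ZR_negative_K_support_vanishing--thm} (from \cite[Prop.~7]{kerz-icm}), which holds for any reduced divisorial $X$ without noetherian hypotheses and is the input you are missing.
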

\begin{proof}
We have a commuative diagram of fibre sequences
\[\begin{xy}\xymatrix{
	\K(\skp{X}_U\on\tilde{Z}) \ar[r] \ar[d] 
	& \K(\skp{X}_U) \ar[r] \ar[d] & \K(U) \ar[d] \\
	\K(\skp{X}_U[t]\on\tilde{Z}[t]) \ar[r]
	& \K(\skp{X}_U[t]) \ar[r] & \K(U[t])
}\end{xy}\]
where the left map is an equivalence by Theorem~\ref{K-ZR-support-homotopy-invariance--thm} and the right map is an equivalence by homotopy invariance of algebraic K-theory for regular noetherian schemes \cite[6.8]{tt90}. 
\end{proof}

\subsection{Vanishing of negative K-theory} \label{schlichting-conjecture--remark}
There was a conjecture of Schlichting whereby the negative K-theory of an abelian category vanishes \cite[9.7]{schlichting06}. Schlichting himself proved the vanishing in degree -1 and subsequently the conjecture in the noetherian case \cite[9.1, 9.3]{schlichting06}. A generalised conjecture about stable \infcats{} with noetherian heart was studied by Antieau-Gepner-Heller \cite{antieau-gepner-heller}. Recently, Neeman \cite{neeman-counterexample} gave a counterexample to both conjectures, Schlichting's and Antieau-Gepner-Heller's. Nevertheless, the following example provides a non-noetherian abelian category whose negative K-theory vanishes.

\begin{example} \label{ZR-not-coherent--ex}
Let $k$ be a discretely valued field with valuation ring $k^\circ$, uniformiser $\pi$, and residue field $\tilde{k}=k^\circ/(\pi)$. Consider the scheme $X_0 := \Spec(k^\circ\skp{t})$, its special fibre $X_0/\pi = \Spec(\tilde{k}\skp{t})$ and the open complement $U:=\Spec(k\skp{t})$. Let $x_0 \colon \Spec(\tilde{k}) \to X_0/\pi$ be the zero section. Then we define the blow-up $X_1 := \Bl_{\{x_0\}}(X_0)$. Its special fibre $X_1/\pi$ is an affine line over $\tilde{k}$ with an $\P^1_{\tilde{k}}$ attached to the origin. We get a commutative diagram
\[
\begin{xy} 
\xymatrix@R=2.5ex@C=1.2ex{ 
	E_1 \ar[rr] \ar[dd] && X_1/\pi \ar[rr] \ar[dd] && X_1  = \Bl_{\{x_0\}}(X_0) \ar[dd] \\
	&\quad\quad\quad\quad&& \Bl_{\{x_0\}}(X_0/\pi) \ar[dl]_\cong \ar[ul] \ar[ur] \\
	\{x_0\} \ar[rr] && X_0/\pi\ar[rr]  && X_0
}
\end{xy}
\]
where the two squares are cartesian, all horizontal maps are closed immersions, the blow-up $\Bl_{\{x_0\}}(X_0/\pi) \to X_0/\pi$ is an isomorphism, and the map $\Bl_{\{x_0\}}(X_0/\pi) \to X_1/\pi$ is also closed immersion. Hence there is a closed immersion $X_0/\pi \inj X_1/\pi$ which splits the canonical projection. Now choose a closed point $x_1 \colon \Spec(\tilde{k}) \to E_1\setminus X_0/\pi$ and define $X_2 := \Bl_{\{x_1\}}(X_1)$ and iterate this construction. Thus we obtain a stricly increasing chain of closed immersions
	\[
	X_0/\pi \inj X_1/\pi \inj \ldots \inj X_n/\pi \inj \ldots\ldots
	\]
and we can consider $X_i/\pi$ as a closed subscheme of $X_n/\pi$ for $i < n$. In each step, $X_{n+1}/\pi$ is obtained from $X_n/\pi$ by attaching a $\P^1_{\tilde{k}}$ to at a closed point not contained in $X_{n-1}/\pi$.
Passing to the Zariski-Riemann space $\skp{X_0}_U$, its special fibre $\skp{X_0}_U/\pi$ admits a strictly decreasing chain of closed subschemes 
	\[
	\skp{X_0}_U/\pi = p_0^*(X_0/\pi) \supsetneq p_1^*(\overline{X_1/\pi\setminus X_0/\pi}) \supsetneq \ldots \supsetneq p_n^*(\overline{X_n/\pi\setminus X_{n-1}/\pi}) \supsetneq \ldots\ldots
	\]
where $p_n \colon \skp{X_0}_U/\pi \to X_n/\pi$ denotes the canonical projection. Thus we obatin a strictly increasing chain of ideals
	\[
	0 = \Ical_0 \subsetneq \Ical_1 \subsetneq \ldots \subsetneq \Ical_n \subsetneq \ldots\ldots
	\]
in $\O_{\skp{X_0}_U/\pi}$ where $\Ical_n$ denotes the ideal sheaf corresponding to $p_n^*(\overline{X_n/\pi\setminus X_{n-1}/\pi})$. By construction, they all have support in $\tilde{Z}=\skp{X_0}_U/\pi$. Thus the category $\Coh_{\skp{X_0}_U/\pi}(\skp{X_0}_U)$ is a non-noetherian abelian category and whose negative K-theory vanishes due to Theorem~\ref{ZR_negative_K_support_vanishing--thm} and Theorem~\ref{ZR_K-theory_support_g-theory--thm}. 
\end{example}

\appendix

	\section{Limits of locally ringed spaces}
	\label{sec-limits-lrs}

In this section we collect for the convenience of the reader some facts about locally ringed spaces and filtered limits of those. This is based on the exposition by Fujiwara-Kato \cite[ch.~0, \S 4.2]{fuji-kato}.
	
\begin{defin} \label{cohesive--def}
We say that a locally ringed space $(X,\O_X)$ is \df{cohesive} iff its structure sheaf $\O_X$ is coherent. 
\end{defin}

\begin{example}
For a locally noetherian scheme $X$, an $\O_X$-module is coherent if and only if it is finitely presented \stacks{01XZ}. Thus a locally noetherian scheme is a cohesive locally ringed space.
\end{example}

\begin{lemma}[{\cite[ch.~0, 4.1.8, 4.1.9]{fuji-kato}}] \label{cohesive_coherent_fin-pres--lem}
\begin{enumerate}
	\item If $(X,\O_X)$ is cohesive, then an $\O_X$-module $F$ is coherent if and only if it is finitely presented. 
	\item If $f\colon (X,\O_X) \to (Y,\O_Y)$ is a morphism of locally ringed spaces and $(X,\O_X)$ is cohesive, then $f^* \colon \Mod(Y) \to \Mod(X)$ restricts to a functor $f^* \colon \Coh(Y) \to \Coh(X)$.
\end{enumerate}
\end{lemma}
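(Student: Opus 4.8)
The plan is to reduce both parts to the standard formal calculus of coherent modules on an arbitrary ringed space, as recalled \eg{} in \cite[ch.~0, \S 4.1]{fuji-kato}: a finite-type submodule of a coherent module is coherent; kernels and cokernels of morphisms between coherent modules are coherent; and in a short exact sequence, coherence of any two of the three terms forces coherence of the third. None of these require a hypothesis on the structure sheaf. In particular, induction on $n$ (using the evident short exact sequences $0 \To \O_X^{n-1} \To \O_X^n \To \O_X \To 0$) shows that $\O_X^n$ is coherent as soon as $\O_X$ is, which is exactly where the cohesiveness hypothesis will enter.

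For part (i), the implication ``coherent $\Rightarrow$ finitely presented'' needs no hypothesis: a coherent $F$ is of finite type, so locally there is a surjection $\O_X^n \surj F$; its kernel $K$ is then of finite type by coherence of $F$, so locally $\O_X^m \surj K$, and composing $\O_X^m \surj K \inj \O_X^n$ presents $F$ locally as $\coker(\O_X^m \To \O_X^n)$, \ie{} $F$ is finitely presented. For the converse I would invoke cohesiveness: working locally, write an exact sequence $\O_X^m \To \O_X^n \To F \To 0$; the image $I$ of the first map is of finite type, hence a finite-type submodule of the coherent module $\O_X^n$, hence itself coherent, so that $F \cong \O_X^n/I$ is coherent as a cokernel of coherent modules. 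Since coherence is local on $X$, this finishes part (i).

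For part (ii), the crucial observation is that pullback of modules is right exact and preserves finite presentation, but not coherence in general --- it is cohesiveness of the \emph{source} $X$, not of the target $Y$, that repairs this. Concretely: given $G \in \Coh(Y)$, apply the unconditional half of (i) to see that $G$ is finitely presented, choose a local presentation $\O_Y^m \To \O_Y^n \To G \To 0$ on $Y$, and apply $f^*$. Since $f^*$ is right exact with $f^*\O_Y = \O_X$, hence $f^*(\O_Y^r) = \O_X^r$, one obtains a local presentation $\O_X^m \To \O_X^n \To f^*G \To 0$ on $X$, so $f^*G$ is finitely presented; because $X$ is cohesive, part (i) upgrades this to coherence of $f^*G$. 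Together with functoriality of $f^*$ this gives the asserted restricted functor $f^* \colon \Coh(Y) \To \Coh(X)$.

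I do not expect a genuine obstacle; the only subtleties are bookkeeping ones --- using the unconditional direction of (i) for $G$ on the possibly non-cohesive $Y$, and noticing that the cohesiveness of $X$ is really used twice, once to make $\O_X^n$ coherent inside part (i) and once through (i) at the end of part (ii).
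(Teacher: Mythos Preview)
Your argument is correct and is exactly the standard one: the paper does not give its own proof here but simply cites \cite[ch.~0, 4.1.8, 4.1.9]{fuji-kato}, and what you have written is precisely the content of that reference. There is nothing to add.
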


\begin{defin} \label{sober-coherent--def}
A topological space is said to be \df{coherent} iff it is quasi-compact, quasi-separated, and admits an open basis of quasi-compact subsets. A topological space is called \df{sober} iff it is a T$_0$-space and any irreducible closed subset has a (unique) generic point.
\end{defin}

\begin{example} 
The underlying topological space of a quasi-compact and quasi-separated scheme is coherent and sober.
\end{example}

\begin{prop}[{\cite[ch.~0, 2.2.9, 2.2.10]{fuji-kato}}] \label{limit-top-spaces--prop}
Let $(X_i,(p_{ij})_{j\in I})_{i\in I}$ be a filtered system of topological spaces. Denote by $X$ its limit and by $p_i\colon X\to X_i$ the projection maps.
\begin{enumerate}
	\item Assume that the topologies of the $X_i$ are generated by quasi-compact open subsets and that the transition maps $p_{ij}$ are quasi-compact. Then every quasi-compact open subset $U\subset X$ is the preimage of a quasi-compact open subset $U_i\subset X_i$ for some $i\in I$.
	\item Assume that all the $X_i$ are coherent and sober and that the transition maps $p_{ij}$ are quasi-compact. Then $X$ is coherent and sober and the $p_i$ are quasi-compact.
\end{enumerate} 
\end{prop}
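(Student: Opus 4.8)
The plan is to extract both parts from the standard description of $X$ as the set of compatible families inside $\prod_{i\in I}X_i$, topologised by the preimages $p_i\inv(V)$ with $i\in I$ and $V\subseteq X_i$ open. Since $I$ is filtered, a finite intersection $\bigcap_k p_{i_k}\inv(V_k)$ rewrites as $p_i\inv\bigl(\bigcap_k p_{i,i_k}\inv(V_k)\bigr)$ for any $i$ dominating all the $i_k$, so the sets $p_i\inv(V)$ already form a basis of $X$; and because the quasi-compact opens form a basis of each $X_i$ and are pulled back to quasi-compact opens along the quasi-compact transition maps, so do the preimages $p_i\inv(V)$ with $V\subseteq X_i$ quasi-compact open. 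Part (i) is then immediate: a quasi-compact open $U\subseteq X$ is covered by finitely many basic opens $p_{i_1}\inv(V_1),\ldots,p_{i_n}\inv(V_n)$ contained in it, and for any $i\ge i_1,\ldots,i_n$ one has $U=p_i\inv\bigl(\bigcup_k p_{i,i_k}\inv(V_k)\bigr)$, where the inner set is a finite union of quasi-compact opens of $X_i$, hence quasi-compact open.

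For part (ii) the substantive point — and the step I expect to be the main obstacle — is quasi-compactness of $X$; the rest is bookkeeping with this basis and with generic points. I would obtain it by passing to the constructible (``patch'') topology: a coherent sober space $Y$ carries the compact Hausdorff topology $Y^{\mathrm{cons}}$ generated by the quasi-compact opens together with their complements, and a quasi-compact continuous map between coherent sober spaces pulls constructible sets back to constructible sets, so induces a continuous map for the patch topologies. Hence $(X_i^{\mathrm{cons}})_{i\in I}$ is a filtered system of compact Hausdorff spaces whose limit $\lim_i X_i^{\mathrm{cons}}$ is again compact Hausdorff (Tychonoff, the limit being closed in the product), has the same underlying set $X$, and the identity map onto $X$ with its given topology is continuous; therefore $X$ is quasi-compact. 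Applying the same argument to the filtered system $(p_{ij}\inv(V))_{j\ge i}$, whose members are quasi-compact opens of the $X_j$ and hence clopen in the patch topology, shows that $p_i\inv(V)=\lim_{j\ge i}p_{ij}\inv(V)$ is quasi-compact for every quasi-compact open $V\subseteq X_i$; so the projections $p_i$ are quasi-compact and, together with the basis statement above, $X$ admits a basis of quasi-compact opens. Quasi-separatedness of $X$ then follows by pushing the intersection of two basic quasi-compact opens to a common index $i$, using quasi-separatedness of $X_i$, and pulling back along the now-quasi-compact map $p_i$.

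It remains to verify sobriety. That $X$ is $T_0$ is checked coordinatewise from the $T_0$ property of the $X_i$. Given an irreducible closed $C\subseteq X$, writing $X\setminus C$ as a union of basic quasi-compact opens exhibits $C=\bigcap_\alpha p_{i_\alpha}\inv(Y_\alpha)$ with each $Y_\alpha\subseteq X_{i_\alpha}$ closed; for every $i$ the closure $\overline{p_i(C)}$ is irreducible closed and so, by sobriety of $X_i$, has a unique generic point $\xi_i$, and the identities $p_{ij}\circ p_j=p_i$ with uniqueness of generic points force the $\xi_i$ to be compatible, yielding a point $\xi\in X$. One then checks $\xi\in C$ (because $\overline{p_{i_\alpha}(C)}\subseteq Y_\alpha$, so $\xi_{i_\alpha}\in Y_\alpha$) and $C=\overline{\{\xi\}}$ (every basic neighbourhood $p_i\inv(V)$ of a point $x\in C$ must contain $\xi$, as $p_i(x)$ lies in the closure of $\{\xi_i\}$ in $X_i$, forcing $\xi_i\in V$), while uniqueness of $\xi$ holds since any generic point of $C$ maps to the generic point $\xi_i$ of $\overline{p_i(C)}$ for each $i$. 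The genuine input in all of this is the compactness of the patch topology — equivalently, that a filtered limit of nonempty coherent sober spaces along quasi-compact maps is nonempty — with everything else reducing to cofinality in $I$ and the elementary behaviour of generic points under continuous maps.
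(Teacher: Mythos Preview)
The paper does not give its own proof of this proposition; it merely cites Fujiwara--Kato \cite[ch.~0, 2.2.9, 2.2.10]{fuji-kato}. Your argument is correct and is in fact the standard route taken there: quasi-compactness of the limit (and of the projections) is obtained by passing to the constructible topology, where each $X_i$ becomes compact Hausdorff and a filtered limit of such is again compact Hausdorff; the remaining properties (basis of quasi-compact opens, quasi-separatedness, sobriety via compatible generic points of $\overline{p_i(C)}$) are then bookkeeping, exactly as you outline. So there is nothing to contrast --- your approach is essentially the one the cited reference gives.
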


\begin{prop} \label{limit_spaces--prop}
Let $(X_i,\O_{X_i},(p_{ij})_{j\in I})_{i\in I}$ be a filtered system of locally ringed spaces. Then its limit $(X,\O_X)$ in the category of locally ringed spaces exists. Let $p_i\colon X\to X_i$ be the canonical projections. 
\begin{enumerate}
	\item The underlying topological space $X$ is the limit of $(X_i)_{i\in I}$  in $\Top$.
	\item $\O_X = \colim_{i\in I} p_i\inv\O_{X_i}$ in $\Mod(X)$.
	\item For every $x\in X$ have $\O_{X,x} = \colim_{i\in I} \O_{X_i,p_i(x)}$ in $\Ring$.
\end{enumerate}
Assume additionally that every $X_i$ is coherent and sober and that all transitions maps are quasi-compact. 
\begin{enumerate} 
	\item[(iv)] The canonical functor
		\[
		\colim_{i\in I} \,\Mod\fp(X_i) \To \Mod\fp(X) 
		\]
	is an equivalence in the 2-category of categories. In particular, for any finitely presented $\O_X$-module $F$ there exists an $i\in I$ and a finitely presented $\O_{X_i}$-module $F_i$ such that $F\cong p_i^*F_i$.
	\item[(v)] For any morphism $\phi\colon F\to G$ between finitely presented $\O_X$-modules there exists an $i\in I$ and a morphism $\phi_i\colon F_i\to G_i$ between finitely presented $\O_{X_i}$-modules such that $\phi\cong p_i^*\phi_i$. Additionally, if $\phi$ is an isomorphism or an epimorphism, then one can choose $\phi_i$ to be an isomorphism or an epimorphism, respectively.
	\item[(vi)] For every $i\in  I$ let $F_i$ be an $\O_{X_i}$-module and for every $i\leq j$ in $I$ let $\phi_{ij} \colon p_{ij}^*F_i \to F_j$ be a morphism of $\O_{X_j}$-modules such that $\phi_{ik} = \phi_{jk} \circ p_{jk}^*\phi_{ij}$ whenever $i\leq j\leq k$ in $I$. Denote by $F$ the $\O_X$-module $\colim_i p_i^*F_i$. Then the canonical map
		\[
		\colim_{i\in I} \Ho^*(X_i,F_i) \To \Ho^*(X,F)
		\]
is an isomorphism of abelian groups.
\end{enumerate}
\end{prop}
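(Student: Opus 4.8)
The plan is to construct the limit by hand, read off (i)--(iii), and then deduce (iv)--(vi) by the usual spectral-space techniques; everything is a mild elaboration of the treatment in Fujiwara--Kato \cite[ch.~0, \S\S 2.2, 4.1--4.2]{fuji-kato}. For the construction, put $X := \lim_i X_i$ in $\Top$, let $p_i\colon X\to X_i$ be the projections, and set $\O_X := \colim_i p_i\inv\O_{X_i}$, the filtered colimit in $\Mod(X)$ of the sheaves of rings $p_i\inv\O_{X_i}$, whose transition maps for $i\le j$ are the adjunction morphisms $p_i\inv\O_{X_i}\cong p_j\inv p_{ij}\inv\O_{X_i}\to p_j\inv\O_{X_j}$. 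Colimits of sheaves being computed stalkwise and $(p_i\inv\O_{X_i})_x=\O_{X_i,p_i(x)}$, this gives (iii): $\O_{X,x}=\colim_i\O_{X_i,p_i(x)}$ in $\Ring$, a filtered colimit of local rings along local homomorphisms (each $p_{ij}$ being a morphism of locally ringed spaces), hence itself local; thus $(X,\O_X)\in\LRS$ and (ii) holds. For the universal property, a compatible family $f_i\colon(T,\O_T)\to(X_i,\O_{X_i})$ induces a unique continuous $f\colon T\to X$, and the compatible family $f_i\inv\O_{X_i}\to\O_T$ assembles into $f\inv\O_X=f\inv\colim_i p_i\inv\O_{X_i}=\colim_i f_i\inv\O_{X_i}\to\O_T$, which is local on stalks because each $f_i$ is; this shows $(X,\O_X)=\lim_i(X_i,\O_{X_i})$ in $\LRS$, proving (i)--(iii).

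Assume now that the $X_i$ are coherent and sober with quasi-compact transition maps, so by Proposition~\ref{limit-top-spaces--prop} $X$ is coherent and sober, the $p_i$ are quasi-compact, and every quasi-compact open of $X$ is of the form $p_i\inv(V_i)$. The technical heart --- and the point at which the coherence hypotheses are genuinely used --- is that, $X$ and its quasi-compact opens being spectral, the functors $\Gamma(V,-)$ (for $V$ a quasi-compact open) and more generally $\Ho^*(X,-)$ commute with the filtered colimit defining $\O_X$ \emph{and} with the cofiltered limit of the underlying spectral spaces; concretely, for $V=p_i\inv(V_i)$ one gets $\O_X(V)=\colim_{j\ge i}\O_{X_j}(p_{ij}\inv(V_i))$, and for an abelian sheaf $G_i$ on $X_i$ one gets $\Ho^*(X,p_i\inv G_i)=\colim_{j\ge i}\Ho^*(X_j,p_{ij}\inv G_i)$. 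Granting this, (iv) goes as follows. For full faithfulness, given $F_i,G_i\in\Modfp(X_i)$ one reduces, via a local presentation of $F_i$ and a finite cover of $X$ by quasi-compact opens (a sheaf-condition argument in low degrees), to the statement that sections of $p_i^*G_i$ over quasi-compact opens commute with the filtered colimit, giving $\Hom_X(p_i^*F_i,p_i^*G_i)=\colim_{j\ge i}\Hom_{X_j}(p_{ij}^*F_i,p_{ij}^*G_i)$. For essential surjectivity, cover $X$ by finitely many quasi-compact opens $V^1,\dots,V^r$ on which a given finitely presented $\O_X$-module $F$ admits presentations by matrices over $\O_X(V^a)=\colim_j\O_{X_j}(V_j^a)$; these descend to a common level $j_0$, yielding $F_{j_0}^a$ on $V_{j_0}^a$ with $p_{j_0}^*F_{j_0}^a\cong F|_{V^a}$, and the comparison isomorphisms on the (finitely many) pairwise overlaps together with the cocycle relations on triple overlaps descend to some $j\ge j_0$ by full faithfulness applied over the relevant open subspaces; gluing the $F_j^a$ then produces an $F_j\in\Modfp(X_j)$ with $p_j^*F_j\cong F$.

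Statement (v) is then formal: by (iv), $\Modfp(X)\simeq\colim_i\Modfp(X_i)$ as a filtered $2$-colimit of categories, so $\phi$ corresponds to a morphism represented by some $\phi_j\colon p_{ij}^*F_i\to p_{ij}^*G_i$ (with $F=p_i^*F_i$ and $G=p_i^*G_i$); and since $\Modfp$ has cokernels, which commute with pullback and whose vanishing detects epimorphisms, both ``isomorphism'' and ``epimorphism'' (the latter via vanishing of the cokernel, a finitely presented module whose pullback to $X$ is zero) are detected at a finite stage of the $2$-colimit, so $p_{jk}^*\phi_j$ is an isomorphism, resp.\ epimorphism, for some $k\ge j$. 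For (vi), since $X$ is coherent, $\Ho^*(X,-)$ commutes with filtered colimits of abelian sheaves, whence $\Ho^*(X,F)=\colim_i\Ho^*(X,p_i^*F_i)$; rewriting $p_i^*F_i=\colim_{k\ge i}p_k\inv(p_{ik}^*F_i)$ and applying the cohomology-and-limits input above, a cofinality argument identifies this with $\colim_i\Ho^*(X_i,F_i)$.

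I expect the main obstacle to be establishing the two technical inputs recorded in the second paragraph and keeping the bookkeeping straight around them: namely that $\Gamma$ over quasi-compact opens and $\Ho^*$ over $X$ commute both with the filtered colimit $\colim_i p_i\inv\O_{X_i}$ and with the cofiltered limit of the spectral spaces $X_i$ (the latter being the spectral-space form of the classical statement that cohomology commutes with cofiltered limits of spaces), together with the mild but recurring nuisance of reconciling the module pullback $p_i^*$ with the sheaf-theoretic pullback $p_i\inv$ in (vi) and of the ``pass to a higher index'' steps used to globalise local presentations and descent data. Once these inputs are in place, everything else is formal and follows the pattern of \cite[ch.~0, \S 4.2]{fuji-kato}.
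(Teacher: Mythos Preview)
Your proposal is correct and follows the same route as the paper: the paper's own proof consists entirely of citations to Fujiwara--Kato \cite[ch.~0, 4.1.10, 4.2.1--4.2.3, 4.4.1]{fuji-kato}, and your plan is precisely an unpacking of those arguments. In fact you give considerably more detail than the paper does; the only thing to note is that, since the proposition is stated in the paper as a collection of background facts with proof-by-reference, your level of detail exceeds what is needed here.
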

\begin{proof}
The existence, (i), (ii), and (iii) are \cite[ch.~0, 4.1.10]{fuji-kato} and (iv) and (v) are \cite[ch.~0, 4.2.1--4.2.3]{fuji-kato}. Finally, (vi) is \cite[ch.~0, 4.4.1]{fuji-kato}.
\end{proof}

\bibliography{0-literature}
\bibliographystyle{amsalpha}
\end{document}